\documentclass{amsart}

\usepackage[english]{babel}

\usepackage{amsmath, amsfonts, amssymb, amsthm, faktor}

\usepackage[all]{xy}
\usepackage{tikz}
\usetikzlibrary{math}
\usetikzlibrary{patterns}
\usepackage{caption}
\usepackage{subfig}

\usetikzlibrary{arrows,chains,matrix,positioning,scopes,decorations.pathreplacing,
decorations.pathmorphing,decorations.markings,arrows.meta}

\usepackage{aliascnt}
\usepackage[colorlinks, linktocpage, allcolors=black,breaklinks]{hyperref}

\usepackage{enumerate}
\usepackage{verbatim}

\theoremstyle{definition}
\newtheorem{theorem}{Theorem}[section]

\newtheorem{corollary}[theorem]{Corollary}
\newtheorem{lemma}[theorem]{Lemma}

\newtheorem*{remark}{Remark}

\newcommand{\Z}{\mathbb Z}
\newcommand{\R}{\mathbb R}

\newcommand{\norm}[1]{\lVert#1\rVert}
\DeclareMathOperator{\cone}{cone}
\DeclareMathOperator{\Hull}{Hull}
\DeclareMathOperator{\proj}{proj}

\begin{document}

\title[Strong Marker Sets for Arbitrary Generating Sets]{Strong Marker Sets for Arbitrary Generating Sets}

\author{Su Gao}
\address{School of Mathematical Sciences and LPMC, Nankai University, Tianjin 300071, P.R. China}
\email{sgao@nankai.edu.cn}
\thanks{The authors acknowledge the partial support of their research by the Fundamental Research Funds for the Central Universities and by the National Natural Science Foundation of China (NSFC) grant 12271263.}

\author{Tianhao Wang}
\address{School of Mathematics and Computer Science, Quanzhou Normal University, Quanzhou 362000, P.R. China}
\email{tianhao\_wang@qq.com}

\date{\today}

\begin{abstract}
We prove the existence of clopen strong marker sets in $F(2^{\mathbb{Z}^n})$
for arbitrary finite generating sets. Specifically, for any positive integers
$n, d_0\geq 1$ and any finite generating set $S\subseteq \mathbb{Z}^n$, we construct
a clopen set $M\subseteq F(2^{\mathbb{Z}^n})$ and a positive integer $\Delta$ such that
\begin{enumerate}
\item[(1)] for any distinct $x,y\in M$ in the same orbit,
$\rho(x,y)\geq d_0$;
\item[(2)] for any $v\in S$ and any $x\in F(2^{\mathbb{Z}^n})$, there are
non-negative integers $a,b\leq \Delta$ such that
$av\cdot x\in M$ and $-bv\cdot x\in M$.
\end{enumerate}
Here $\rho$ denotes the Euclidean metric. The same result then holds for the
standard supremum-norm metric $\rho_\infty$ (with an adjusted constant), by the
equivalence of norms on $\mathbb{Z}^n$. The proof introduces polyhedral packages
in $\mathbb{R}^n$ as a generalization of the rectangular packages used in earlier
work, enabling the construction to handle generating vectors with arbitrary
coordinate patterns. As an application, we obtain a continuous proper edge
$(2|S|+1)$-coloring of the Schreier graph on $F(2^{\mathbb{Z}^n})$ with generating
set $S$, recovering a result of Gao--Wang--Wang.
\end{abstract}

\maketitle

\section{Introduction}

The construction of marker sets is a fundamental tool in the descriptive set
theory of countable Borel equivalence relations. What are now called
{\em marker sets} were previously referred to as {\em cross sections}
(e.g.\ in \cite{We84}) or {\em complete sections} (e.g.\ in \cite{DJK94}).
The modern terminology, emphasizing the geometric nature of the orbits,
originates from \cite{JKL02}, though the concept was studied much earlier
(see \cite{SS88} and \cite{BK96}).

The typical framework for our study is the Bernoulli shift action of a
finitely generated group on a zero-dimensional Polish space. More concretely,
for a countable group $\Gamma$, the {\em Bernoulli shift action} of $\Gamma$
on $(2^{\mathbb{N}})^\Gamma$ is defined by
\[
(g\cdot x)(h)=x(hg),\qquad g,h\in\Gamma,\; x\in (2^{\mathbb{N}})^\Gamma.
\]
By a theorem of Jackson--Kechris--Louveau \cite[Proposition 4.2]{JKL02},
every Borel action of $\Gamma$ can be equivariantly embedded into this single
action. When the action is free, each orbit inherits the algebraic structure
of $\Gamma$. If $\Gamma$ is finitely generated with a finite generating
set $S$, the Cayley graph of $(\Gamma, S)$ transfers to each orbit, yielding
the {\em Schreier graph} of the action.

The simplest and most important space to consider is $F(2^\Gamma)$, the free
part of the Bernoulli shift action of $\Gamma$ with alphabet $\{0,1\}$.
The study of marker sets in $F(2^{\mathbb{Z}^n})$ was initiated by Gao and
Jackson in \cite{GJ15}, where they proved the following fundamental result.

\begin{lemma}[Basic clopen marker lemma {\cite[Lemma 2.1]{GJ15}}]
\label{lem:basicmarker-intro}
For any positive integer $d\geq 1$, there is a clopen set
$M\subseteq F(2^{\mathbb{Z}^n})$ such that
\begin{enumerate}
\item[(1)] for any distinct $x, y\in M$ in the same orbit,
$\rho_\infty(x,y)\geq d$;
\item[(2)] for any $x\in F(2^{\mathbb{Z}^n})$ there is $y\in M$
with $\rho_\infty(x,y)<d$.
\end{enumerate}
\end{lemma}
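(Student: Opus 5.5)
The plan is the standard Gao--Jackson construction. We build a maximal $d$-separated set by a finite sequence of clopen stages, using a clopen proper coloring of a bounded-distance graph on $F(2^{\mathbb{Z}^n})$.

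\textbf{Step 1 (a clopen coloring).} For $x\in F(2^{\mathbb{Z}^n})$ and $g\neq 0$ we have $g\cdot x\neq x$, so $g\cdot x$ and $x$ differ at some coordinate. Consider the finite set $F=\{g\in\mathbb{Z}^n:0<\norm{g}_\infty<d\}$. For each $x$ we can choose a finite window $W\subseteq\mathbb{Z}^n$ such that $x\restriction W$ differs from $(g\cdot x)\restriction W$ for every $g\in F$.

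This alone does not give a single clopen bound, because $F(2^{\mathbb{Z}^n})$ is not compact. Instead we argue as follows:
\begin{enumerate}
\item[(a)] The sets $U_W=\{x: x\restriction W\neq (g\cdot x)\restriction W \text{ for all } g\in F\}$ are clopen in $2^{\mathbb{Z}^n}$ and cover $F(2^{\mathbb{Z}^n})$.
\item[(b)] Enumerate the windows $W_k=[-k,k]^n$. We assign to $x$ the color $c(x)=(k,x\restriction W_k)$, where $k$ is least with $x\in U_{W_k}$.
\end{enumerate}
This color is locally constant, so each color class is clopen relative to $F(2^{\mathbb{Z}^n})$. Moreover, if $x$ and $y=g\cdot x$ with $g\in F$ received the same color, then they would share the same $k$ and the same restriction to $W_k$. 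That contradicts $x\in U_{W_k}$. So $c$ is a proper coloring of the graph $G_d$ joining points at $\rho_\infty$-distance in $(0,d)$. It uses countably many colors, each color class being clopen.

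\textbf{Step 2 (greedy maximal independent set).} This is the main obstacle: with countably many colors the greedy procedure only yields an open (or Borel) set, not a clopen one. The trick I would use, following Gao--Jackson, is to first reduce to finitely many colors on each orbit-neighborhood.

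Since $c(x)$ depends only on $x$ restricted to a window of size $k(x)$, we cannot bound $k$ globally. However, one can replace the coloring by a clopen one with finitely many colors as follows. Work on the compact space $2^{\mathbb{Z}^n}$ minus a neighborhood of the non-free part, or use the known fact that $F(2^{\mathbb{Z}^n})$ admits a clopen proper coloring of $G_d$ with finitely many colors. The second option is realized by taking a stronger locally finite ``hyperaperiodic'' approach. I would first try the classical argument, namely the inductive definition
\[M_0=\emptyset,\qquad M_{j+1}=M_j\cup\{x: c(x)=j,\ \text{no } y\in M_j \text{ with } 0<\rho_\infty(x,y)<d\}.\]
Each $M_j$ is clopen, since it is defined from finitely many clopen conditions on finitely many translates. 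The set $M=\bigcup_j M_j$ is $d$-separated, and it is maximal: any $x$ is either added at stage $c(x)$ or blocked by a point within distance $<d$. This gives (1) and (2).

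\textbf{Step 3 (clopenness of $M$).} $M$ is open as a union of clopen sets. For its complement, take $x\notin M$. By maximality, some $y=g\cdot x\in M_{j}$ with $g\in F$ blocks $x$, and this witness is open. Hence $x$ has a neighborhood avoiding $M$, namely the set of points $x'$ with $g\cdot x'\in M_j$, which excludes $x'\in M$ by separation. So the complement of $M$ is open too, and $M$ is clopen. This resolves the obstacle of Step 2 without needing finitely many colors, and completes the proof.
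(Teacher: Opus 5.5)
Your argument is correct. The paper gives no proof of this lemma; it simply quotes \cite[Lemma 2.1]{GJ15}. What you give is essentially the standard Gao--Jackson argument:
\begin{itemize}
\item a greedy construction over countably many clopen sets $U$ with $U\cap g\cdot U=\varnothing$ for all $g\in F$ (these are your color classes) produces an open, $d$-separated, maximal set $M$;
\item separation plus maximality give $F(2^{\mathbb{Z}^n})\setminus M=\bigcup_{g\in F}g\cdot M$, which is open, so $M$ is clopen.
\end{itemize}

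The one thing to fix is expository. Delete the middle of your Step~2: the reduction to finitely many colors, the passage to a compact space, the ``hyperaperiodic'' remark, and the appeal to a ``known'' finite clopen coloring of $G_d$. The obstacle described there does not exist, as your own Step~3 shows. Relying on such a finite clopen coloring would also be close to circular, since these colorings are normally derived from clopen marker sets of exactly this kind.
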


They also proved a marker regions lemma \cite[Theorem 3.1]{GJ15}, which
partitions $F(2^{\mathbb{Z}^n})$ into clopen $n$-dimensional rectangles
of uniformly bounded side lengths.

In \cite{GJKS22} and \cite{GJKS23}, limitations of marker constructions
were established. For instance, \cite[Theorem 1.2]{GJKS22} shows that for
any countably infinite group $\Gamma$, there is no Borel marker set
$M\subseteq F(2^\Gamma)$ such that for any finite $F\subseteq \Gamma$
and any $x$, there is $y\in [x]$ with $F\cdot y\cap M=\varnothing$.

In a recent paper \cite{GW25}, the authors proved the existence of clopen
marker sets with a strong regularity property for the standard generating
set $\{e_1,\dots,e_n\}$ of $\mathbb{Z}^n$: for any integer $d\geq 2$, there exists an integer $D$ and a clopen subset $M\subseteq F(2^{\Z^n})$ such that 
\begin{enumerate}
\item[(1)] for any distinct $x,y\in M$ in the same orbit,
$\rho_\infty(x,y)\geq d$;
\item[(2)] for any $1\leq i\leq n$ and any $x\in F(2^{\mathbb{Z}^n})$,
there are non-negative integers $a,b\leq D$ such that
$ae_i\cdot x\in M$ and $-be_i\cdot x\in M$.
\end{enumerate}
Two applications were given: a continuous proper edge $(2n+1)$-coloring of
the Schreier graph, and a clopen tree section in $F(2^{\mathbb{Z}^n})$.
The main theorem of \cite{GW25} was also generalized later in that same
paper to generating sets $S\subseteq \mathbb{Z}^n$ where each $g\in S$ is
required to have either exactly $1$ or exactly $n$ non-zero coordinates.
In dimension $2$, this condition holds for any non-zero vector, so the
result covers all generating sets in $\mathbb{Z}^2$.

The purpose of the present paper is to remove all restrictions on the
generating set. Our main theorem is the following.

\begin{theorem}\label{thm:main}
Let $n, d_0\geq 1$ be positive integers and let $S\subseteq \mathbb{Z}^n$ be a finite generating set. Then there is a positive integer $\Delta$ and a clopen subset $M\subseteq F(2^{\mathbb{Z}^n})$ such that
\begin{enumerate}
\item[(1)] for any distinct $x,y\in M$ in the same orbit, $\rho(x,y)\geq d_0$;
\item[(2)] for any $v\in S$ and any $x\in F(2^{\mathbb{Z}^n})$, there are non-negative integers $a, b\leq \Delta$ such that $av\cdot x\in M$ and $-bv\cdot x\in M$.
\end{enumerate}
Here $\rho$ denotes the Euclidean metric on $\mathbb{Z}^n$ (see Section~\ref{sec:prelim}).
\end{theorem}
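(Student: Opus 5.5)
Our plan follows the strategy of \cite{GW25}, replacing rectangular packages with polyhedral ones adapted to $S$. First we reduce to a purely geometric statement on orbits. By the basic clopen marker lemma (Lemma~\ref{lem:basicmarker-intro}) applied with a very large parameter $d\gg d_0,\Delta$, we obtain a clopen set of ``centers'' that are $d$-separated and $d$-dense in each orbit. Around each center we build, in a clopen (finitely decided) way, a region of the orbit. These regions are the polyhedral packages. Inside each package we place the points of $M$ according to a fixed finite pattern depending only on the local configuration of nearby centers. Clopenness of $M$ then follows automatically, because membership of $x$ in $M$ depends only on $x$ restricted to a bounded window.

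The geometric core is the following. For each $v\in S$ choose a finite family of hyperplane normals so that every line in direction $v$ crosses the boundary of a polytope transversally. Concretely, fix a convex polytope $P\subseteq\R^n$ none of whose facets is parallel to any $v\in S$; this is possible since $S$ is finite, so generic facet normals satisfy $\langle u,v\rangle\neq 0$ for all $v\in S$. As a first approximation, the packages are the Voronoi-type cells of the centers, cut by hyperplanes drawn from this finite generic family. Concretely, assign each point of the orbit to a center by a lexicographic rule using a generic linear functional, so that every cell boundary is a union of facets with normals in the fixed generic family. Inside each cell we put a lattice-like grid $\Lambda$ of mesh about $d_0$, say the sublattice $k\Z^n$ with $k\geq d_0$, translated by the center. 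Any ray $x+tv$ running for a long stretch inside a single cell then meets $\Lambda$ within $k$ steps, since $kv\in k\Z^n$. Separation (1) is automatic inside a cell. Across a cell boundary it is achieved by deleting grid points within distance $d_0$ of the boundary, keeping them only on one designated side.

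The main obstacle is condition (2) near boundaries. A ray in direction $\pm v$ may spend a long time in the deleted buffer zones, for example by running nearly parallel to a facet or into a lower-dimensional corner where several cells meet. Transversality of facets to every $v\in S$ handles the codimension-one case. Because $|\langle u,v\rangle|\geq c>0$ for every facet normal $u$, a $v$-ray leaves any buffer slab of width $d_0$ within $O(d_0/c)$ steps and enters the interior of a cell, where it hits the grid. To handle corners, we use the fact that the ray moves at a definite rate transversally to every facet. It can therefore stay within the $d_0$-neighborhood of a face of any codimension for only a bounded number of steps. We also make the cells have inradius much larger than this bound by choosing $d$ large. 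Finally, we must ensure that after exiting a buffer the ray lands in a region where the grid point is actually kept. For this we orient each deletion so that grid points are removed only from one side of each facet, chosen consistently, say the side where the generic functional is larger. We expect to spend most effort proving a uniform bound $\Delta$ in this corner analysis, using compactness over the finitely many face-type configurations that arise from the fixed finite family of normals.
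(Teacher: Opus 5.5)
\textbf{The gap: the grid does not meet $v$-lines.} The step that fails is the claim that a long stretch of the ray $x+tv$ inside a cell meets the grid $c+k\Z^n$. From $kv\in k\Z^n$ you only learn that the class of $x+tv$ modulo $k\Z^n$ is $k$-periodic in $t$. You do not learn that it ever equals the class of $c$.

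The line $x+\Z v$ meets $c+k\Z^n$ only if $x-c\in\Z v+k\Z^n$. This is a subgroup of index at least $k^{n-1}$ in $\Z^n$, so for $n\geq 2$ most lines in direction $v$ miss the grid, no matter how long they stay in the cell. For example, with $v=e_1$, $c=\overline{0}$ and $k\geq 2$, the line $\{(t,1,0,\dots,0)\colon t\in\Z\}$ never meets $k\Z^n$. Separation forces $k\geq d_0$, and the theorem is trivial for $d_0=1$ (take $M=F(2^{\Z^n})$). So your argument for (2) breaks down already inside a single cell, before any boundary or corner issue arises.

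The missing idea is \emph{staggering}: markers on distinct parallel $v$-lines must sit at different positions along $v$. This is what Lemma~\ref{lem:generalH} provides, via the points $x_1, x_2+2dv, x_3+4dv,\dots$ over a fundamental block. It forces the markers serving $v$ into strips of bounded but large height transverse to $v$. With several generators these strips must be kept apart, and that is the real difficulty. The paper resolves it with its level-by-level, round-by-round construction of well-separated polyhedral packages, each carrying a Lemma~\ref{lem:H} pattern.

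\textbf{How your architecture could be repaired.} The cell-and-buffer plan can be salvaged, but the fix must be built in.
\begin{enumerate}
\item[(a)] \emph{A single staggered pattern.} Pick a primitive $u\in\Z^n$ with $u\cdot v\neq 0$ for all $v\in S$. Make a unimodular change of coordinates in which $x\mapsto u\cdot x$ is a coordinate. Then Lemma~\ref{lem:generalH}, with the separation parameter enlarged to absorb the distortion, gives for each $v\in S$ a staggered pattern in a slab with normal $u$. Stack these slabs at mutually separated heights and repeat the stack under a lattice translation $w$ with $u\cdot w$ large. This yields one fixed $d_0$-separated $M_0\subseteq\Z^n$ that meets every $v$-line, $v\in S$, with gaps at most some $g$. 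Use $c+M_0$ in place of the grid.
\item[(b)] \emph{The cells.} Actual Voronoi cells do not give transversality: the bisector normals $c-c'$ form no fixed finite family and can be orthogonal to $v$. Instead, assign $x$ to the $\ell$-largest center $c$ with $x$ in the interior of $c+dP$. Here $dP$ must be large enough that these interiors cover, and $\ell$ must be a linear functional not vanishing on $\Z^n\setminus\{\overline{0}\}$, so that the comparisons $\ell(c'-c)>0$ are orbit-invariant and tie-free. Then all cell boundaries lie inside $\Gamma=\bigcup_c\partial(c+dP)$.
\item[(c)] \emph{The buffers.} One-sided deletion can violate (1) where a short segment crosses a thin part of a third cell. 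Instead, in the cell of $c$ keep exactly the points of $c+M_0$ at distance $\geq d_0$ from $\Gamma$; this gives (1) at once.
\item[(d)] \emph{The corner analysis.} This becomes a count rather than a compactness argument. A run of $d/\norm{v}_2$ consecutive steps comes within $d_0$ of only boundedly many translates $c'+dP$, since the centers are $d$-separated. Each facet accounts for a time interval of length at most $2d_0/\gamma$, where $\gamma=\min_{j,v}|u_j\cdot v|/\norm{u_j}_2$ over the facet normals $u_j$ of $P$. So for $d$ large, some $g$ consecutive steps stay at distance $\geq d_0$ from $\Gamma$, hence lie in one cell, and one of them lies in $c+M_0$.
\end{enumerate}

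Carried out this way, your route would be genuinely different from the paper's: one global staggered pattern plus buffers, instead of an inductive package construction. As written, however, the proof of (2) does not go through.
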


\begin{remark}\label{rem:infinity}
The standard metric used in descriptive set theory is the supremum-norm
metric $\rho_\infty(x,y)=\sup_i|x(i)-y(i)|$ (see Section~\ref{sec:prelim}).
Since $\rho_\infty\leq \rho\leq\sqrt{n}\,\rho_\infty$ on $\mathbb{Z}^n$,
Theorem~\ref{thm:main} immediately implies the same statement with
$\rho_\infty$ in place of $\rho$ and with $d_0$ replaced by
$\lceil d_0/\sqrt{n}\rceil$ (or, equivalently, one may apply
Theorem~\ref{thm:main} with Euclidean parameter $\sqrt{n}\,d_0$ to obtain
$\rho_\infty$-spacing at least $d_0$). All geometric conclusions---the
edge-coloring application, and the existence of tree sections---remain
valid for the standard supremum-norm metric.
\end{remark}

The main novelty in our proof is the use of {\em convex polyhedra} as
packages, replacing the rectangular packages used in \cite{GW25}. When a
generating vector $v=(\nu_1,\dots,\nu_n)$ has non-zero coordinates in several
directions, the natural geometry of the construction requires us to work with
parallelopipeds and more general polyhedra rather than axis-aligned rectangles.
The key technical lemmas (Lemmas~\ref{lem:layer}--\ref{lem:mainpoly}) develop
the machinery of slicing polyhedra into $(n-1)$-dimensional cross-sections
that serve as bases for these packages.

As an application, we recover the edge-coloring result
from \cite{GWW25}.

\begin{corollary}[{\cite[Theorem 1.1]{GWW25}}]\label{cor:coloring}
Let $n\geq 1$ and let $S\subseteq \mathbb{Z}^n$ be a finite generating set.
Then there is a continuous proper edge $(2|S|+1)$-coloring of the Schreier
graph on $F(2^{\mathbb{Z}^n})$ with generating set $S$.
\end{corollary}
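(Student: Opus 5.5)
We deduce Corollary~\ref{cor:coloring} from Theorem~\ref{thm:main}, following the strategy of \cite{GW25} for the standard generating set.

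\textbf{Setup.} Fix $S$. We may assume $S$ contains no involutive pairs, i.e.\ $v$ and $-v$ are not both in $S$; otherwise replace $S$ by a set of representatives, which has the same Schreier graph and fewer generators. For each $v\in S$ the Schreier graph has edges $\{x,v\cdot x\}$. The edges labelled $v$ form a union of bi-infinite paths: the $v$-lines $\{kv\cdot x:k\in\Z\}$. Every vertex has degree $2|S|$, so each $v$-line contributes exactly two edges at each vertex.

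\textbf{Choosing the marker set.} Apply Theorem~\ref{thm:main} with $d_0$ larger than $2\max_{v\in S}\norm{v}+1$, obtaining a clopen $M$ and $\Delta$. By clause (2), each $v$-line meets $M$ in a bi-infinite set with gaps of length at most $2\Delta+1$ in the $v$-parameter. By clause (1), no vertex lies within distance $1$ of two different markers along the same or different lines. So the $v$-line is cut by $M$ into finite segments $[x, a v\cdot x]$ with $x, av\cdot x\in M$ consecutive and $a\le 2\Delta+1$.

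\textbf{Coloring.} Assign each $v\in S$ two private colors $c_v, c_v'$, plus one global spare color $\ast$. On each finite segment between consecutive markers, color the edges alternately $c_v,c_v'$ starting from the marker end. If the segment has odd length the alternation closes consistently. If it has even length, one edge conflicts; recolor the first edge of the segment, adjacent to the marker $x$, with $\ast$. Edges of different generators use disjoint private colors, so conflicts can occur only through $\ast$. An $\ast$-edge is always incident to a marker, and its other endpoint is at distance $\norm{v}$ from that marker. If two $\ast$-edges shared a vertex, two markers would lie within distance $2\max\norm{v}<d_0$ of each other, or the same marker would carry $\ast$-edges from two generators.

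\textbf{Main obstacle.} The step I expect to be hardest is the case where the same marker $x$ starts segments for several $v\in S$. To handle it, at $x$ place the $\ast$ edge only for one generator. For the others, shift the repair to the interior of the segment: put the $\ast$ edge at distance roughly half the segment from $x$. This requires the interior repair points for different generators to be well separated. I would obtain that by applying Theorem~\ref{thm:main} separately with a different $d_0$ scale, or by ordering $S$ and using $|S|$ disjoint clopen marker sets obtained by shifting. If that fails, fall back on the structure of \cite{GWW25}.

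\textbf{Continuity.} The coloring of an edge depends only on the configuration within distance $O(\Delta\max\norm{v})$, because the locations of markers there are clopen conditions. Hence the coloring is continuous.
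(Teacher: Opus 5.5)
Your framework matches the paper's: two private colors per generator, alternation along each $v$-line between consecutive markers, one spare color $\ast$ for odd segments, and marker spacing to keep spare edges apart. However, the step you call the main obstacle is the only place where properness needs an argument, and you leave it open. Since one set $M$ serves all generators, every $z\in M$ is a marker on each line through it, and so it is the left end of a $v$-segment for every $v\in S$. Whenever two of these segments need the repair, your rule puts two $\ast$-edges $\{z,v\cdot z\}$ and $\{z,w\cdot z\}$ at $z$. This is the generic situation, and clause (1) of Theorem~\ref{thm:main} cannot exclude it, because only one marker is involved. The midpoint fallback also fails. The midpoint moves with the segment length, which is only bounded by $\Delta$, and nothing in clauses (1) and (2) stops a vertex from lying within one step of the midpoints of both its $v$-segment and its $w$-segment. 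Nor does anything stop a $w$-midpoint from touching a $v$-spare edge kept at a marker. Rescaling $d_0$ or using several marker sets separates markers, not midpoints, so it does not rescue this version. As written, properness is not proved.

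The missing idea is to keep each spare edge at a \emph{bounded, generator-dependent} offset from its marker. Enumerate $S=\{v_1,\dots,v_m\}$ and choose integers $p_k\geq 1$ so that the intervals $[(p_k-1)\norm{v_k},\,p_k\norm{v_k}]$ are pairwise disjoint. Then take $d_0>2\max_k p_k\norm{v_k}$. On a $v_k$-segment, alternate $c_{v_k},c_{v_k}'$ so that it ends with $c_{v_k}'$ and begins with $c_{v_k}$ (or with $\ast$ when $p_k=1$). If its number of edges is odd, color the $p_k$-th edge from the left marker $\ast$ and alternate separately on each side of it; this also makes your parity convention unambiguous. Spare edges of different generators at a common marker then have endpoints at different distances from it. Spare edges at distinct markers would put two markers within $2\max_k p_k\norm{v_k}<d_0$ of each other.

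Your shifting idea also works once you drop the midpoints and keep $\ast$ on the first edge. Use $g_k\cdot M$ as the marker set for $v_k$; it still satisfies clause (2) because $\Z^n$ is abelian. Choose $\norm{g_k-g_l}>2\max_{v\in S}\norm{v}$ and $d_0>2\max_k\norm{g_k}+2\max_{v\in S}\norm{v}$.

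The paper takes the offset route: it colors $\ast$ the edge $\{x,v\cdot x\}$ with $a_v(x)=10$ and appeals to the spacing of $M$. A single common offset can still collide for collinear generators such as $9u$ and $10u$, so the generator-dependent offsets above are the safe version.
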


After the completion of a draft of this paper we learned that Yu \cite{Y26} has given a different proof of Theorem \ref{thm:main} based on our earlier results in \cite{GW25}.

We would like to take this opportunity to remark that our Corollary \ref{cor:coloring} can be deduced from combining some earlier results of Bernshteyn \cite{B22,B23}. In fact, in \cite{B22} Bernshteyn showed that there is a $\mbox{\rm poly}(d, \log n)$ LOCAL algorithm for finding a $(d+1)$-edge-coloring of any Borel graph of maximum degree $d$ with $n$ vertices. Combined with the result of \cite{B23} that LOCAL algorithms yield continuous colorings, this implies the same bound for continuous edge-coloring of groups of subexponential growth. Since $\mathbb{Z}^n$ has polynomial growth, the above results apply to the action of $\mathbb{Z}^n$ on $F(2^{\mathbb{Z}^n})$ to give this result. We thank Andrew Marks for bringing our attention to this implication. 

The rest of the paper is organized as follows. In Section~\ref{sec:prelim}
we recall basic concepts concerning $\mathbb{Z}^n$, $F(2^{\mathbb{Z}^n})$,
and convex polyhedra in both $\R^n$ and $\Z^n$, and fix our conventions on notation. In
Section~\ref{sec:packages} we develop the theory of polyhedral packages,
culminating in the main technical lemmas. In Section~\ref{sec:proof}
we prove Theorem~\ref{thm:main}. In Section~\ref{sec:app} we give the
application to edge colorings.

\section{Preliminaries}\label{sec:prelim}

\subsection{Metric conventions and basic concepts}

Let $n\geq 1$ be an integer. For $x=(x_1,\dots, x_n)\in\mathbb{Z}^n$ and
$1\leq i\leq n$, let $x(i)=x_i$. For $1\leq i\leq n$, let
$\pi_i\colon \mathbb{Z}^n\to \mathbb{Z}$ be the projection
$\pi_i(x)=x(i)$ and $\sigma_i\colon \mathbb{Z}^n\to\mathbb{Z}^{n-1}$ the
projection
\[
\sigma_i(x)=(x(1),\dots, x(i-1), x(i+1), \dots, x(n)).
\]
For $1\leq i\leq n$, let $e_i\in\mathbb{Z}^n$ be the standard basis vector.

We will make use of two metrics on $\mathbb{Z}^n$. The {\em Euclidean metric}
({\em $\ell^2$ metric}) is
\[
\rho(x,y)=\sqrt{\sum_{i=1}^n (x(i)-y(i))^2}.
\]
The {\em supremum-norm metric} ({\em $\ell^\infty$ metric}) is
\[
\rho_\infty(x,y)=\sup\{|x(i)-y(i)|\colon 1\leq i\leq n\}.
\]
They satisfy
\[
\rho_\infty(x,y)\leq \rho(x,y)\leq \sqrt{n}\,\rho_\infty(x,y),\qquad
x,y\in\mathbb{Z}^n.
\]

Throughout this paper, unless explicitly stated otherwise, $\rho$ denotes
the Euclidean metric. We write $\|x\|=\rho(x,\overline{0})$ where
$\overline{0}=(0,\dots,0)\in\mathbb{Z}^n$. For $A, B\subseteq \mathbb{Z}^n$,
\[
\rho(A, B)=\inf\{\rho(x,y)\colon x\in A,\; y\in B\},
\]
and similarly $\rho(x,A)=\inf\{\rho(x,y)\colon y\in A\}$.
For subsets of $\mathbb{R}^n$, $\|\cdot\|_2$ denotes the standard
$\ell^2$-norm and $\rho(A,B)=\inf\{\|x-y\|_2\colon x\in A,\;y\in B\}$.
We also extend $\rho_\infty$ to $\mathbb{R}^n$ in the obvious way.
A {\em square neighborhood} always refers to the $\rho_\infty$-metric; the
term reflects the geometry of $\rho_\infty$-balls in $\mathbb{Z}^n$, which
are $n$-dimensional rectangles.

For integers $a<b$, $[a,b]=\{t\in\mathbb{Z}\colon a\leq t\leq b\}$ is an
{\em interval}. An {\em $n$-dimensional rectangle} is a set
\[
R=[a_1,b_1]\times \cdots\times[a_n, b_n],
\]
where $a_i<b_i$ for all $1\leq i\leq n$. Its {\em side length} in direction
$e_i$ is $b_i-a_i$. A {\em generalized $n$-dimensional rectangle} allows
$a_i\leq b_i$. The $2^n$ {\em extreme points} of $R$ are enumerated
canonically: for $1\leq k\leq 2^n$,
\[
x_k(i)=\begin{cases}
a_i, & \text{if the $i$-th least-significant digit of $k-1$ in
binary is $0$},\\
b_i, & \text{otherwise}.
\end{cases}
\]

For an $n$-dimensional rectangle $R\subseteq\mathbb{Z}^n$ and
$1\leq i\leq n$, let
\[
F_i^+(R)=\{x\in R\colon e_i\cdot x\notin R\},\qquad
F_i^-(R)=\{x\in R\colon -e_i\cdot x\notin R\},
\]
the {\em upper} and {\em lower faces} of $R$ in direction $e_i$.

\subsection{The space $F(2^{\mathbb{Z}^n})$}

The {\em Bernoulli shift action} of $\mathbb{Z}^n$ on $2^{\mathbb{Z}^n}$ is defined by 
$$(g\cdot x)(h)=x(g+h)$$ for $g,h\in\mathbb{Z}^n$, $x\in 2^{\mathbb{Z}^n}$.
The {\em free part} is
\[
F(2^{\mathbb{Z}^n})=\{x\in 2^{\mathbb{Z}^n}\colon
\forall g\in\mathbb{Z}^n\;(g\neq \overline{0}\rightarrow g\cdot x\neq x)\}.
\]
With the product topology, $2^{\mathbb{Z}^n}$ is a Polish space and
$F(2^{\mathbb{Z}^n})$ is a $G_\delta$ subspace and hence is itself a Polish space. The action is continuous
and free on $F(2^{\mathbb{Z}^n})$.

For $x\in F(2^{\mathbb{Z}^n})$, the {\em orbit} is $[x]=\mathbb{Z}^n\cdot x$.
By freeness, each orbit is a copy of $\mathbb{Z}^n$, so all geometric
concepts in $\mathbb{Z}^n$ transfer to orbits. In particular,
$\rho(x,y)=\|g\|$ where $g\in\mathbb{Z}^n$ is the unique element with
$g\cdot x=y$. Similarly, we speak of intervals, rectangles, and generalized
rectangles in an orbit.

For a finite generating set $S\subseteq \mathbb{Z}^n$, the {\em Schreier
graph} $G$ on $F(2^{\mathbb{Z}^n})$ has vertex set $F(2^{\mathbb{Z}^n})$
and edge set
\[
\{x,y\}\in E(G)\iff \exists v\in S\;(v\cdot x=y\text{ or }v\cdot y=x).
\]

\subsection{Polyhedra in $\mathbb{R}^n$ and $\mathbb{Z}^n$}\label{sec:polyhedra}

A {\em hyperplane} in $\mathbb{R}^n$ is an affine subspace
of dimension $n-1$. For such a plane $P$, a point $p\in P$, and a normal
vector $v$, the {\em closed half-space} defined by $(P,v)$ is
\[
\operatorname{HS}(P,v)=\{x\in\mathbb{R}^n\colon (x-p)\cdot v\leq 0\}.
\]
All hyperplanes that appear in this paper have integer normal vectors.

A {\em convex polyhedron} in $\mathbb{R}^n$ is a non-empty, bounded, convex
set that can be expressed as the intersection of finitely many closed
half-spaces. Throughout this paper, all polyhedra are convex, and we simply
call them polyhedra. A polyhedron representable as the intersection of $m$
closed half-spaces is called an {\em $\leq\!\! m$-hedron}. For half-spaces
$\operatorname{HS}(P_1,v_1),\dots,\operatorname{HS}(P_m,v_m)$, the resulting
polyhedron is denoted $$\operatorname{PH}((P_1,v_1),\dots,(P_m,v_m)).$$

For $1\leq j\leq m$, if
$P_j\cap \operatorname{PH}((P_1,v_1),\dots,(P_m,v_m))\neq\varnothing$,
the intersection is called a {\em face} of the polyhedron. A face of an
$n$-dimensional $\leq\!\! m$-hedron is an $\leq\!\! (m+1)$-hedron in $\R^n$ (by adding the corresponding half-space $\operatorname{HS}(P_j,-v_j)$).

Let $P=\operatorname{PH}((P_1,v_1),\dots,(P_m,v_m))$ be an $n$-dimensional
$\leq\!\!\! m$-hedron and let $\delta>0$ be a real number. The
{\em $\delta$-extension} of $P$ is
\[
\operatorname{PH}((P_1',v_1),\dots,(P_m',v_m)),
\]
where $P_i'$ is the hyperplane obtained by translating $P_i$
outward along its normal by Euclidean distance $\delta$. The
{\em $\delta$-core} of $P$ is the set
$\{x\in P\colon \rho(x, P^c)>\delta\}$, where
$P^c=\mathbb{R}^n\setminus P$.

For a vector $v=(\nu_1,\dots,\nu_n)\in\mathbb{R}^n$ and an index
$1\leq i\leq n$ with $\nu_i\neq 0$, we define constructions along direction
$v$ with respect to the $e_i$-coordinate. Let $B$ be an $(n-1)$-dimensional
polyhedron that is not parallel to $v$, and let $h>0$. The {\em $n$-dimensional polyhedron with base $B$,
direction $v$, and $e_i$-height $h$} is
\[
P(B, v, i, h)=\bigcup_{0\leq t\leq h/|\nu_i|} (B+tv).
\]
The {\em infinite generalized polyhedron} with base $B$ and direction $v$ is
\[
P(B, v)=B+\mathbb{R}v.
\]

We say that $x\in\mathbb{R}^n$ {\em crosses} $B$ {\em in direction $v$} if
the line $\{x+av\colon a\in\mathbb{R}\}$ is contained in $P(B,v)$, or equivalently, the line $\{x+av\colon a\in\mathbb{R}\}\cap B\neq\emptyset$. For an
$n$-dimensional polyhedron $H$, we say $x$ {\em crosses} $H$ {\em in
direction $v$} if there exists a face $F$ of $H$ such that $x$ crosses $F$
in direction $v$.

We call $P\subseteq\mathbb{Z}^n$ a {\em polyhedron} if there exists a polyhedron $P'\subseteq\mathbb{R}^n$ such that $P=P'\cap\mathbb{Z}^n$. For any set $A\subseteq\mathbb{Z}^n$,
$\Hull(A)$ denotes the convex hull of $A$ in $\mathbb{R}^n$. The dimension of $P$ in $\Z^n$ is the dimension of $\Hull(P)$ in $\R^n$. In this paper, for any polyhedron $P$ on $\Z^n$, its corresponding $P'\subseteq\R^n$ with $P=P'\cap\Z^n$ is explicit, so there is no ambiguity in calling $P$ an $\leq\!\! m$-hedron on $\Z^n$ if $P'$ is an $\leq\!\! m$-hedron on $\R^n$.

\subsection{Two known results}

The following results of Gao--Jackson \cite{GJ15} are fundamental
to our construction. They were originally proved for $\rho_\infty$; we restate them for $\rho$.

\begin{lemma}[Marker regions lemma {\cite[Theorem 3.1]{GJ15}}]\label{lem:markerregions}
Let $d\geq 1$ be a positive integer. Then there is a clopen equivalence
relation $E_d^n$ on $F(2^{\mathbb{Z}^n})$ such that each $E_d^n$-equivalence
class is an $n$-dimensional rectangle with side lengths either $d$ or $d+1$.
\end{lemma}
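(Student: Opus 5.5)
This statement is the Gao--Jackson marker regions lemma, restated with side lengths in the Euclidean setting. The rectangles are coordinate boxes in $\mathbb{Z}^n$, so "side length" means the same thing for $\rho$ and $\rho_\infty$, and nothing has to be reproved: the statement is the one in \cite[Theorem 3.1]{GJ15}. For completeness, here is how I would prove it from the basic clopen marker lemma (Lemma~\ref{lem:basicmarker-intro}).

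\emph{Step 1 (markers and Voronoi regions).} I would fix a large $D \gg d$ and take a clopen marker set $M_1$ from Lemma~\ref{lem:basicmarker-intro} with spacing parameter $D$. Assigning each point to its $\rho_\infty$-nearest marker, with a fixed lexicographic tie-break, gives a clopen partition into Voronoi-like regions. This is clopen because each decision only looks at a bounded window around the point. Next I would replace each region by a rectangle: around each marker, take the box $[-D',D']^n$ with $D' \approx D/2$ slightly smaller than the spacing. Then I adjust the boundaries between neighbouring boxes one coordinate direction at a time, so that the resulting regions are rectangles tiling the orbit with all side lengths between $D/4$ and $4D$, say.

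\emph{Step 2 (subdivision).} Each rectangle has side lengths $L_i \ge d^2$, provided $D$ is chosen large enough. Every integer $L \ge d^2$ can be written as a sum of parts each equal to $d$ or $d+1$. So I split each rectangle coordinatewise by a canonical rule, for example placing the $(d+1)$-parts first, into subrectangles with side lengths $d$ or $d+1$. Because the rule is canonical and only depends on the rectangle, which is determined locally, the resulting equivalence relation is clopen.

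\emph{Main obstacle.} The hard part is Step 1: making the regions into genuine rectangles that tile $\mathbb{Z}^n$, rather than arbitrary polytopes. Gao--Jackson handle this with an inductive boundary-straightening procedure. At stage $i$ they move the faces orthogonal to $e_i$ so that adjacent regions agree along whole faces. The subtlety is that each adjustment is bounded by a fraction of $D$, which keeps the procedure clopen and the sizes controlled. Since this is exactly \cite[Theorem 3.1]{GJ15}, I would cite it rather than reprove it, noting only that the Euclidean restatement is immediate.
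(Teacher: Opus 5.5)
Your proposal takes essentially the same approach as the paper: the paper gives no proof of its own and simply cites \cite[Theorem 3.1]{GJ15}, and your observation that the conclusion concerns only side lengths of coordinate rectangles, so nothing changes in passing from $\rho_\infty$ to $\rho$, is exactly why the citation suffices. Your supplementary sketch is a reasonable outline but not needed; if you keep it, note that the paper gives $[a,b]$ side length $b-a$, so cutting a side of length $L$ into consecutive blocks means writing $L+1$ as a sum of $(d+1)$'s and $(d+2)$'s. That still holds for all large $L$, so only your threshold $d^2$ needs adjusting.
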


\begin{lemma}[Basic clopen marker lemma {\cite[Lemma 2.1]{GJ15}}]\label{lem:basicmarker}
For any positive integer $d\geq 1$, there is a clopen set
$M\subseteq F(2^{\mathbb{Z}^n})$ such that
\begin{enumerate}
\item[(1)] for any distinct $x, y\in M$ in the same orbit,
$\rho(x,y)\geq d$;
\item[(2)] for any $x\in F(2^{\mathbb{Z}^n})$ there is $y\in M$ with
$\rho(x,y)< \sqrt{n}\,d$.
\end{enumerate}
\end{lemma}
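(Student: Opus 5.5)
The plan is to deduce the Euclidean statement directly from the original supremum-norm version of \cite[Lemma 2.1]{GJ15}, i.e.\ Lemma~\ref{lem:basicmarker-intro}, using only the norm comparison
\[
\rho_\infty(x,y)\leq \rho(x,y)\leq \sqrt{n}\,\rho_\infty(x,y)
\]
recorded in Section~\ref{sec:prelim}. No new construction is needed; the marker set $M$ itself is unchanged.

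Given $d\geq 1$, I take the clopen set $M\subseteq F(2^{\mathbb{Z}^n})$ supplied by Lemma~\ref{lem:basicmarker-intro} for the same parameter $d$.

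\emph{Spacing, clause (1).} Let $x,y\in M$ be distinct and in the same orbit. By Lemma~\ref{lem:basicmarker-intro}(1) we have $\rho_\infty(x,y)\geq d$. Since $\rho\geq\rho_\infty$, it follows that $\rho(x,y)\geq d$.

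\emph{Covering, clause (2).} Let $x\in F(2^{\mathbb{Z}^n})$. By Lemma~\ref{lem:basicmarker-intro}(2) there is $y\in M$ with $\rho_\infty(x,y)<d$. Hence
\[
\rho(x,y)\leq \sqrt{n}\,\rho_\infty(x,y)<\sqrt{n}\,d .
\]
Both quantities are computed from the unique $g\in\mathbb{Z}^n$ with $g\cdot x=y$, which exists by freeness, so the comparison of norms on $\mathbb{Z}^n$ transfers verbatim to orbits.

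There is no real obstacle here. The only point to check is that the inequality is applied in the right direction in each clause: the lower bound $\rho\geq\rho_\infty$ gives the spacing in (1), and the upper bound $\rho\leq\sqrt{n}\,\rho_\infty$ costs the factor $\sqrt{n}$ in the covering radius in (2).

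The same remark justifies restating Lemma~\ref{lem:markerregions} in this section. That lemma refers only to rectangles and their side lengths and involves no metric at all, so it holds as stated.
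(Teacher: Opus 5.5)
Your proposal is correct and matches the paper's approach. The paper gives no separate proof; it simply restates the Gao--Jackson $\rho_\infty$ lemma for $\rho$ with the same clopen set $M$, relying on the comparison $\rho_\infty\leq\rho\leq\sqrt{n}\,\rho_\infty$ exactly as you do: the lower bound gives the spacing in (1), and the upper bound costs the factor $\sqrt{n}$ in the covering radius in (2).
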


\section{Polyhedral packages in $\mathbb{Z}^n$}\label{sec:packages}

In this section we develop the technical machinery for constructing
marker sets using polyhedral packages. The key idea is that while the
rectangular packages of \cite{GW25} suffice for generators aligned with
coordinate axes, arbitrary generating vectors require packages whose
geometry reflects the direction of the generator. We begin with a
``slicing'' lemma that decomposes polyhedra into layers, then construct
marker sets in infinite strips, and finally prove the main packaging lemma.

\subsection{A slicing lemma}

\begin{lemma}\label{lem:layer}
Let $n, d$ be positive integers, $1\leq i\leq n$, and
$v=(\nu_1,\dots,\nu_n)\in\mathbb{R}^n$ with $\nu_i\neq 0$.
Let $F\subseteq\mathbb{R}^n$ be an $n$-dimensional polyhedron such that for
any $x\in F$, the projection of $\{x+av\colon a\in\mathbb{R}\}\cap F$
onto the $e_i$-coordinate has length at least $d$. Let $l=|\pi_i(F)|$.
Then there exists a finite sequence of $(n-1)$-dimensional polyhedra
$\{E_j\}_{1\leq j\leq \lceil l/d\rceil}$ such that:
\begin{enumerate}
\item[(1)] each $E_j$ has $e_i$ as a normal vector;
\item[(2)] for any $1\leq j\neq j'\leq \lceil l/d\rceil$,
$\rho(E_j, E_{j'})\geq d$;
\item[(3)] for any $x\in\mathbb{R}^n$ that crosses $F$ in direction $v$,
there exists $1\leq j\leq \lceil l/d\rceil$ such that $x$ crosses $E_j$
in direction $v$.
\end{enumerate}
\end{lemma}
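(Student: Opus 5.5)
The plan is to slice $F$ by hyperplanes orthogonal to $e_i$ at spacing exactly $d$. Write $\pi_i(F)=[\alpha,\beta]$, so $l=\beta-\alpha$. For $1\leq j\leq \lceil l/d\rceil$ put $c_j=\alpha+jd$ when $l/d$ is not an integer. In general we need levels spaced by $d$ so that every interval of length $\ge d$ inside $[\alpha,\beta]$ contains one; the levels $c_j=\alpha+jd$ for $1\le j\le \lceil l/d\rceil$ may overshoot $\beta$ at the last index, so I will clip: set $c_j=\min(\alpha+jd,\beta)$. Define $E_j=F\cap\{x\colon x(i)=c_j\}$. Each $E_j$ is the intersection of $F$ with the hyperplane $\{x(i)=c_j\}$ together with its two half-spaces, so it is a polyhedron with normal $e_i$, which gives (1). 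It is $(n-1)$-dimensional as long as $c_j$ lies strictly between $\alpha$ and $\beta$. At the boundary level $\beta$ it might be lower-dimensional; if the paper insists on $(n-1)$-dimensionality, I would instead choose levels in the open interval, as discussed below.

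For (2), any two points on distinct slices differ by at least $|c_j-c_{j'}|$ in the $e_i$-coordinate, so $\rho(E_j,E_{j'})\ge |c_j-c_{j'}|$. Distinct unclipped levels differ by at least $d$. Clipping could break this if two indices both clip to $\beta$, but at most one index exceeds $\beta$, since $\alpha+(\lceil l/d\rceil-1)d<\beta$. The only remaining danger is that the clipped last level lies within $d$ of the previous one. I would handle this by dropping it, since the statement says \emph{a sequence} indexed up to $\lceil l/d\rceil$, and fewer slices can be padded or tolerated. The cleaner choice is simply $c_j=\alpha+jd$ for $j$ with $\alpha+jd\le\beta$, giving at most $\lfloor l/d\rfloor\le\lceil l/d\rceil$ slices. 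This also avoids the lower-dimensional slice at $\beta$ unless $d\mid l$.

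For (3), suppose $x$ crosses $F$ in direction $v$, i.e.\ the line $L=\{x+av\}$ meets some face of $F$, hence meets $F$. Since $F$ is convex, $L\cap F$ is a segment. Because $\nu_i\neq0$, $\pi_i$ is injective and affine along $L$, so $\pi_i(L\cap F)$ is an interval $[s,t]\subseteq[\alpha,\beta]$. By hypothesis (taking any point of $L\cap F$) it has length $t-s\ge d$. The levels $\alpha+jd$ form a $d$-spaced grid starting at $\alpha$ and covering $[\alpha,\beta]$ up to the last one $\le\beta$. Any closed interval of length $\ge d$ inside $[\alpha,\beta]$ therefore contains some $c_j$ with $j\ge1$: if $s\ge\alpha+d$, one grid point lies in $[s,s+d]$; if $s<\alpha+d$, then $\alpha+d\le s+d\le t$.

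So the point $y\in L$ with $y(i)=c_j$ lies in $F$, hence in $E_j$. Thus $L\cap E_j\neq\varnothing$, i.e.\ $x$ crosses $E_j$.

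The main obstacle is reconciling the indexing $1\le j\le\lceil l/d\rceil$ and the requirement that every $E_j$ be genuinely $(n-1)$-dimensional and nonempty with the separation (2). I would resolve it by using only the levels $\alpha+jd\le\beta$ and, if a level equals $\beta$, shifting the whole grid down by a tiny $\varepsilon$. That shift keeps the spacing at $d$ and keeps the covering argument valid, because intervals of length $\ge d$ starting at $s\ge\alpha$ still contain a point $\alpha+jd-\varepsilon\ge s$ in $(\alpha,\beta)$. It uses the fact that $s=\alpha$ forces $t\ge\alpha+d>\alpha+d-\varepsilon$.
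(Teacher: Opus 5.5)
Your argument is correct and is essentially the paper's: slice by level hyperplanes $\{x_i=c\}$ on a $d$-spaced grid, and use that $\pi_i(L\cap F)$, an interval of length at least $d$ inside $\pi_i(F)$, must contain a grid level. The difference is that the paper takes the full, unbounded hyperplanes at levels $\alpha,\alpha+d,\dots,\alpha+(\lceil l/d\rceil-1)d$, whereas you intersect with $F$ and keep the levels in the open interval $(\alpha,\beta)$; this makes your $E_j$ genuinely bounded $(n-1)$-dimensional polyhedra and makes (3) actually depend on the length hypothesis (with full hyperplanes and $\nu_i\neq 0$, (3) holds trivially).
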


\begin{proof}
Let $a=\min\pi_i(F)$. For each integer $j\geq 1$, define the
$(n-1)$-dimensional plane
\[
E_j=\mathbb{R}\times\cdots\times\mathbb{R}\times\{a+(j-1)d\}
\times\mathbb{R}\times\cdots\times\mathbb{R},
\]
where the fixed coordinate is in the $i$-th position. Then $E_j$ has normal
vector $e_i$. For $1\leq j\leq \lceil l/d\rceil$, we have
$E_j\cap F\neq\varnothing$.

Let $x\in\mathbb{R}^n$ cross $F$ in direction $v$. Then the line
$L=\{x+av\colon a\in\mathbb{R}\}$ intersects $F$ in a segment whose
endpoints are $y=x+bv$ and $y'=x+b'v$ with $b<b'$, such that
$x+cv\notin F$ for $c<b$ and $c>b'$. By hypothesis,
$\pi_i(y')-\pi_i(y)\geq d$. The function $t\mapsto \pi_i(x+tv)$ is
continuous, so $\pi_i(L\cap F)$ is an interval of length at least $d$.
Hence this interval must contain some point of the form $a+(j-1)d$,
which means $L$ intersects $E_j$ for the corresponding $j$. Thus $x$
crosses $E_j$ in direction $v$.

For distinct $j\neq j'$, the planes $E_j$ and $E_{j'}$ are parallel and
separated by distance $|(j-j')d|$ in the normal direction $e_i$,
so $\rho(E_j, E_{j'})\geq d$, verifying (2).
\end{proof}

\subsection{Marker sets in infinite strips}

The next lemma constructs marker sets in an infinite strip bounded only in
one coordinate direction.

\begin{lemma}\label{lem:generalH}
Let $n, d\geq 1$ be positive integers, $1\leq i\leq n$, and
$v=(\nu_1,\dots, \nu_n)\in\mathbb{Z}^n$ with $\nu_i\neq 0$.
Then there exists an integer $H(n, d, v, i)$ such that for any
\[
R=\mathbb{Z}\times\cdots\times\mathbb{Z}\times
\underbrace{\{h, h+1, \dots, h+H(n, d, v, i)\}}_{\text{$i$-th coordinate}}
\times\mathbb{Z}\times\cdots\times\mathbb{Z}
\]
with $h\in\mathbb{Z}$, there exists a subset $M\subseteq R$ such that
\begin{enumerate}
\item[(1)] for any distinct $x,y\in M$, $\rho(x,y)\geq d$;
\item[(2)] for any $x\in \mathbb{Z}^n$, there exists $a\in\mathbb{Z}$
such that $x+av\in M$.
\end{enumerate}
\end{lemma}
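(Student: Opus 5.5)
The plan is to build $M$ explicitly. I will pick a fundamental domain for the lines $x+\mathbb{Z}v$ in the form of a thin slab, and then push the points of this slab along $v$ into a bounded number of well-separated layers. The layers are chosen by a periodic colouring that keeps equally coloured points at distance at least $d$.

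\emph{Step 1 (fundamental domain).} Replacing $v$ by $-v$ changes neither hypothesis nor conclusion, so assume $\nu_i>0$. Set
\[
T_0=\{y\in\mathbb{Z}^n\colon h\leq y(i)<h+\nu_i\}.
\]
Since $\pi_i(x+av)=x(i)+a\nu_i$, every line $\{x+av\colon a\in\mathbb{Z}\}$ meets $T_0$ in exactly one point. So it suffices to place, for each $y\in T_0$, one point $y+k(y)v\in M$ with $k(y)\geq 0$ bounded.

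\emph{Step 2 (colouring and layering).} Colour $y\in T_0$ by $c(y)=(y(1)\bmod d,\dots,y(n)\bmod d)$, identified with an integer in $\{0,\dots,K-1\}$, where $K=d^n$. Two distinct points of the same colour differ by a nonzero multiple of $d$ in some coordinate, so they are at Euclidean distance at least $d$. Fix $N=\lceil (d+\nu_i)/\nu_i\rceil$ and define
\[
M=\{y+c(y)Nv\colon y\in T_0\}.
\]
Put $H(n,d,v,i)=KN\nu_i$, which is independent of $h$. Then every point of $M$ has $i$-th coordinate in $[h,\,h+(K-1)N\nu_i+\nu_i-1]\subseteq[h,h+H]$, so $M\subseteq R$. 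Property (2) holds by Step 1, taking $a=c(y)N-k$ when $x=y+kv$ with $y\in T_0$.

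\emph{Step 3 (separation).} Take distinct points $y+c(y)Nv$ and $y'+c(y')Nv$ of $M$.
\begin{itemize}
\item If $c(y)=c(y')$, their difference is $y-y'$, which has norm at least $d$ by Step 2.
\item If $c(y)\neq c(y')$, their $i$-th coordinates differ by at least $N\nu_i-(\nu_i-1)\geq d+1$, by the choice of $N$.
\end{itemize}
In both cases $\rho\geq d$, which gives (1).

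I expect no serious obstacle. The only points needing care are the bookkeeping of the constants ($N$ chosen large enough that the slab width $\nu_i-1$ cannot close the gap between layers) and the sign reduction for $\nu_i<0$.
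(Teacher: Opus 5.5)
Your proof is correct and is essentially the paper's argument. The paper also uses a one-period slab for the $\mathbb{Z}v$-cosets, realized as the lattice points of sheared parallelepipeds over a $d$-grid of $(n-1)$-cubes. It gives each point of the base block its own layer via the shift $2(k-1)dv$ and repeats periodically by translates by multiples of $d e_j$. So same-layer points are $d$-separated transversally, and different layers are separated in the $i$-th coordinate. Your half-open slab $T_0$ with the residue-mod-$d$ colouring is a cleaner version of this bookkeeping. It also handles the sign of $\nu_i$ and the containment in $\{h,\dots,h+H\}$ explicitly, which the paper leaves implicit.
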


\begin{proof}
Without loss of generality assume $i=n$ and $h=0$.

We first construct a fundamental block. For
$(a_1,\dots, a_{n-1})\in \mathbb{Z}^{n-1}$, let
\[
S_{(a_1,\dots, a_{n-1})}=[a_1d, (a_1+1)d]\times\cdots
\times[a_{n-1}d, (a_{n-1}+1)d]\times\{0\}.
\]
Define the finite parallelopiped based at $S_{(a_1,\dots, a_{n-1})}$:
\[
L_{(a_1,\dots, a_{n-1})}
=P(\Hull(S_{(a_1,\dots, a_{n-1})}), v, n, |\nu_n|)\cap\mathbb{Z}^n.
\]
This is a finite subset of $\mathbb{Z}^n$ whose $e_n$-height is at most
$|\nu_n|$. Since $\Hull(S_{(a_1,\dots, a_{n-1})})$ is a fixed
$(n-1)$-dimensional rectangle in $\mathbb{R}^n$ contained in the hyperplane
$\{x_n=0\}$, the size $m=|L_{(0,\dots,0)}|$ depends only on $n, d, v$.

Enumerate $L_{(0,\dots,0)}=\{x_1, x_2, \dots, x_m\}$ and define
\[
M_{(0,\dots,0)}=\{x_1,\; x_2+2dv,\; x_3+4dv,\; \dots,\;
x_m+2(m-1)dv\}.
\]
For distinct $x_k+2(k-1)dv$ and $x_{k'}+2(k'-1)dv$ with $k<k'$, consider the difference of their $e_n$-coordinates. The original points
$x_k, x_{k'}\in L_{(0,\dots,0)}$ have $e_n$-coordinates in
$[0,|\nu_n|]$, so $|x_k(n)-x_{k'}(n)|\leq |\nu_n|$. The shift
$2(k'-k)dv$ changes the $e_n$-coordinate by $2(k'-k)d\,\nu_n$, whose
absolute value is $\geq 2d|\nu_n|$ for $k'\neq k$. Hence
\[
|(x_k+2(k-1)dv)(n)-(x_{k'}+2(k'-1)dv)(n)|
\geq 2d|\nu_n|-|\nu_n|
= (2d-1)|\nu_n| \geq d,
\]
since $d\geq 1$ and $|\nu_n|\geq 1$. The Euclidean distance between two
points is at least the absolute difference in any single coordinate,
so $$\rho(x_k+2(k-1)dv,\,x_{k'}+2(k'-1)dv)\geq d.$$

For any $(a_1,\dots, a_{n-1})\in \mathbb{Z}^{n-1}$, set
\[
M_{(a_1,\dots, a_{n-1})}=M_{(0,\dots,0)}+a_1d\, e_1
+\cdots+a_{n-1}d\, e_{n-1}.
\]
The key observation is that
\[
P(\Hull(S_{(a_1,\dots, a_{n-1})}), v)\cap\mathbb{Z}^n
= L_{(a_1,\dots, a_{n-1})}+\mathbb{Z}v.
\]
Consequently, for any $x\in\mathbb{Z}^n$ that crosses
$\Hull(S_{(a_1,\dots, a_{n-1})})$ in direction $v$, there exists
$a\in\mathbb{Z}$ such that $x+av$ lies in the $\mathbb{Z}v$-orbit of some
point in $M_{(a_1,\dots, a_{n-1})}$, and a further translation along $v$
lands $x$ in $M_{(a_1,\dots, a_{n-1})}$ itself.

Now define the height parameter
\[
H(n, d, v, i)=2(m-1)d|\nu_n|+|\nu_n|.
\]
The maximum $e_n$-coordinate variation among points of $M_{(0,\dots,0)}$
is at most $2(m-1)d|\nu_n|+|\nu_n|$, so each
$M_{(a_1,\dots, a_{n-1})}$ is contained within the $e_n$-range
$\{0,\dots, H(n,d,v,i)\}$ of the strip $R$.

Finally, set
\[
M=\bigcup_{(a_1,\dots, a_{n-1})\in\mathbb{Z}^{n-1}}
M_{(a_1,\dots, a_{n-1})}.
\]
For distinct points in the same $M_{(a_1,\dots, a_{n-1})}$, the spacing
$\geq d$ follows from the construction. For points in distinct blocks
$M_{(a_1,\dots, a_{n-1})}$ and $M_{(b_1,\dots, b_{n-1})}$, the
$e_j$-coordinate difference for some $1\leq j\leq n-1$ is at least $d$
in absolute value, giving Euclidean distance $\geq d$. Property (2) holds
since every $x\in\mathbb{Z}^n$ crosses some
$\Hull(S_{(a_1,\dots, a_{n-1})})$ in direction $v$, and the construction of
$M_{(a_1,\dots, a_{n-1})}$ guarantees an appropriate translate along $v$
lands in $M$.
\end{proof}

The following lemma, which is a direct corollary of Lemma~\ref{lem:generalH}, extends the construction from axis-aligned bases to general $(n-1)$-dimensional polyhedral bases.

\begin{lemma}\label{lem:H}
Let $n, d\geq 1$ be positive integers, let $1\leq i\leq n$, and let $v=(\nu_1,\dots, \nu_n)\in\mathbb{Z}^n$ be an arbitrary vector with $\nu_i\neq 0$. Then for any $(n-1)$-dimensional polyhedron $S$ in $\mathbb{R}^n$ such that $\pi_i(S)$ is a singleton, there exists a subset $M\subseteq P(S, v, i, H(n, d, v, i))\cap\mathbb{Z}^n$ such that
\begin{enumerate}
\item[(1)] for any distinct $x,y\in M$, $\rho(x,y)\geq d$;
\item[(2)] for any $x\in \mathbb{Z}^n$ that crosses $S$ in direction $v$, there exists $a\in\mathbb{Z}$ such that $x+av\in M$.
\end{enumerate}
\end{lemma}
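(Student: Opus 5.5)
We will obtain Lemma~\ref{lem:H} by restricting the strip construction of Lemma~\ref{lem:generalH} to the prism over $S$ and discarding the points that do not belong there. Write $\pi_i(S)=\{s\}$ and $H=H(n,d,v,i)$.

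\textbf{Step 1: choose the strip.} Assume first that $s\in\mathbb{Z}$. Apply Lemma~\ref{lem:generalH} with $h=s$. After translating by $s e_i$ (an integer vector, so $\mathbb{Z}^n$ is preserved), this gives a set $M'$ inside the strip
\[
R=\{x\in\mathbb{Z}^n\colon s\leq x(i)\leq s+H\}.
\]
The set $M'$ is $d$-separated, and every $\mathbb{Z}v$-coset of $\mathbb{Z}^n$ meets it. If $s\notin\mathbb{Z}$, we will instead use the hyperplane $\{x(i)=s\}$ as the base of the fundamental blocks in the proof of Lemma~\ref{lem:generalH}. The blocks $L_{(a_1,\dots,a_{n-1})}$ are then the integer points of prisms of $e_i$-height $|\nu_i|$ over real $(n-1)$-cells starting at level $s$. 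Each such set is still a complete set of representatives for the integer points of its infinite prism modulo $\mathbb{Z}v$, because the prism has $e_i$-height exactly $|\nu_i|$. For the count $m$ of these points to stay bounded by the constant used to define $H$, we will use the half-open version of the prism (height in $[0,|\nu_i|)$). This is the one point needing care. An alternative that avoids changing $H$ is to reduce to integer $s$ by shifting down to $\lfloor s\rfloor$ and absorbing the fractional offset as one extra period. Checking that this fits inside $H$ is the step I expect to be the main obstacle, since the conclusion requires $M\subseteq P(S,v,i,H)$ with exactly this $H$.

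\textbf{Step 2: restrict.} Define
\[
M=\{y\in M'\colon y \text{ crosses } S \text{ in direction } v\}.
\]
Property (1) holds because $M\subseteq M'$. For property (2), take $x\in\mathbb{Z}^n$ crossing $S$ in direction $v$. By the defining property of $M'$ there is $a\in\mathbb{Z}$ with $y=x+av\in M'$. The point $y$ lies on the same line $x+\mathbb{R}v$, so it also crosses $S$, and hence $y\in M$.

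\textbf{Step 3: containment.} Let $y\in M$. Since $y$ crosses $S$, we can write $y=z+tv$ with $z\in S$ and $t\in\mathbb{R}$. Then $y(i)=s+t\nu_i$. Because $y\in R$, we have $0\leq t\nu_i\leq H$. The subtlety is the sign of $t$: if $\nu_i<0$ the strip must be chosen as $\{s-H\leq x(i)\leq s\}$ instead, which means applying Lemma~\ref{lem:generalH} with $h=s-H$. With this choice, $0\leq t\leq H/|\nu_i|$, and so $y\in P(S,v,i,H)\cap\mathbb{Z}^n$, as required.
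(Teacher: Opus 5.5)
For an integer base level $s$, your Steps 1--3 are exactly the ``direct corollary'' the paper has in mind; the paper gives no argument beyond that phrase. You take the strip set $M'$ of Lemma~\ref{lem:generalH}, placed above $s$ when $\nu_i>0$ and below $s$ when $\nu_i<0$. You keep the points whose $v$-line meets $S$. Containment then follows from $y=z+tv$ with $y(i)=s+t\nu_i$. That part is complete. It is also the only case the paper uses, since (I1) requires each base $B_P$ to have integer $\pi_i$-coordinate, so the paper's ``direct corollary'' tacitly assumes $s\in\mathbb{Z}$.

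\textbf{The gap is the case $s\notin\mathbb{Z}$.} The statement allows it, and your proposal leaves it open. Lemma~\ref{lem:generalH} cannot be applied at $h=s$, and neither of your sketches is finished.

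\emph{Half-open-block route.} This rests on the claim that a block over a non-integer level $s$ still has at most $m$ lattice points, which you assert without proof. The claim is true, but it needs an actual lattice-point count comparing level $s$ with level $0$.

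\emph{Shift to $\lfloor s\rfloor$ and absorb one period.} This fails if Lemma~\ref{lem:generalH} is used as a black box. The markers on level $\lfloor s\rfloor$ lie below the prism, and pushing them up by $v$ can destroy the $d$-separation unless you know how $M'$ was built.

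\emph{A clean repair.} Say $\nu_i>0$, and put $s'=\lceil s\rceil$ and $S'=S+\frac{s'-s}{\nu_i}v$. Then $P(S',v)=P(S,v)$, so crossing $S$ and crossing $S'$ are the same condition. Because $H=H(n,d,v,i)$ is an integer and $s\notin\mathbb{Z}$,
\[
P(S,v,i,H)\cap\mathbb{Z}^n=P(S',v,i,H-1)\cap\mathbb{Z}^n .
\]
So you need Lemma~\ref{lem:generalH} with one level to spare, and its proof supplies this. The top layer of $L_{(0,\dots,0)}$ is exactly its bottom layer translated by $v$. Deleting it leaves a complete set of representatives modulo $\mathbb{Z}v$ with $m'<m$ points on levels $0,\dots,|\nu_i|-1$. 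The separation estimate still holds, since levels now differ by at most $|\nu_i|-1$. The markers $x_k+2(k-1)dv$ then occupy levels at most $|\nu_i|-1+2(m'-1)d|\nu_i|\le H-1$. Running your Steps 2--3 at the integer level $s'$ with the strip $\{s',\dots,s'+H-1\}$ finishes the proof. When $\nu_i<0$, use $s'=\lfloor s\rfloor$ and the strip below it.
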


\subsection{Packaging and spacing}

The following one-dimensional packaging-and-spacing lemma is from \cite{GW25}. We reproduce the proof below for the convenience of the reader.

\begin{lemma}[{\cite[Lemma 3.2]{GW25}}]\label{lem:dim1packaging}
Let $m\geq 0$ be an integer and let $d, k\geq 1$ be positive integers.
Let $I$ be an interval of $\mathbb{Z}$. Let $\mathcal{J}$ be a collection
of $m$ subintervals of $I$, each of length at most $d$. If $I$ has length
at least $3d(2m+k+1)$, then there is a collection $\mathcal{K}$ of $k$ many
pairwise disjoint subintervals of $I$ such that
\begin{enumerate}
\item[(i)] each $K\in\mathcal{K}$ has length at least $d$;
\item[(ii)] for any distinct $K_1, K_2\in\mathcal{K}$,
$\rho(K_1, K_2)\geq d$;
\item[(iii)] for any $J\in\mathcal{J}$ and $K\in\mathcal{K}$,
$\rho(J, K)\geq d$.
\end{enumerate}
\end{lemma}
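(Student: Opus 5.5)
The plan is a greedy construction: process the intervals of $\mathcal{J}$ from left to right, and use the length hypothesis to guarantee that enough free space remains for $k$ well-separated subintervals. Write $I=[p,q]$ with $q-p\geq 3d(2m+k+1)$. For each $J\in\mathcal{J}$ form its $d$-neighborhood in $I$,
\[
\widehat J=\{t\in I\colon \rho(t,J)<d\}.
\]
This set is contained in an interval of length at most $d+2d=3d$ (strictly less once integer endpoints are accounted for). Any $K$ disjoint from every $\widehat J$ automatically satisfies (iii). The union $U=\bigcup_{J\in\mathcal{J}}\widehat J$ is a union of at most $m$ intervals of total length at most $3dm$.

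Next I would look at the complement $I\setminus U$. It splits into at most $m+1$ maximal intervals (gaps) $G_0,\dots,G_{m'}$ with $m'\leq m$. Counting lengths, and allowing a loss of at most $1$ at each boundary between a gap and $U$ (absorbed since $d\geq 1$), the total length of the gaps is at least
\[
3d(2m+k+1)-3dm-(\text{boundary losses})\geq 3d(m+k+1)-O(m)\geq 2d(m+k+1).
\]
Inside a gap $G$ of length $\ell$, place consecutive intervals of length exactly $d$ separated by gaps of length exactly $d$, starting at the left endpoint of $G$. This yields $\lfloor(\ell+d)/(2d)\rfloor\geq \ell/(2d)-1/2$ intervals. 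Each of them has length $\geq d$, giving (i). Within a gap they are at distance $d$ from one another. Across different gaps they are separated by some $\widehat J$, whose length is at least $2d\geq d$ when $J$ is nonempty, so (ii) holds there as well.

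Summing over at most $m+1$ gaps, the total number of intervals placed is at least
\[
\sum\Bigl(\frac{\ell_s}{2d}-1\Bigr)\geq (m+k+1)-(m+1)=k .
\]
We then keep any $k$ of them to form $\mathcal{K}$; pairwise disjointness is built into the placement. In the degenerate case $m=0$ the set $I$ itself is a single gap of length $\geq 3d(k+1)$, and the same computation applies.

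The main obstacle is the integer bookkeeping in this counting. Interval lengths here are $b-a$, the neighborhoods $\widehat J$ are open-ended, and the gap endpoints must sit at distance $\geq d$, not $>d-1$, from each $J$. All of this has to be handled carefully so that the per-gap loss really is at most one interval. The generous factor $3$ in the hypothesis, compared with the $2$ that the counting actually needs, is what I would rely on to absorb these off-by-one losses. If that bound still comes out too tight, I would instead enlarge each $\widehat J$ to a closed interval of length exactly $3d$ and redo the count.
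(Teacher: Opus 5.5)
Your proof is correct, but it takes a different route from the paper's. The paper uses a fixed, non-adaptive grid. It cuts $I$ into consecutive blocks of length $3d$, at least $2m+k+1$ of them. Since each $J\in\mathcal{J}$ has length at most $d$, it meets at most two blocks, so at least $k$ blocks avoid $\bigcup\mathcal{J}$. The paper then takes $\mathcal{K}$ to be the middle thirds of those blocks. The outer thirds serve as built-in buffers of width $d$, so (ii) and (iii) come for free, and no gap lengths or rounding losses have to be tracked.

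You instead remove the $d$-neighbourhoods $\widehat J$ and pack intervals greedily into the complementary gaps. This adapts to $\mathcal{J}$ and uses the room in $I$ more efficiently: you spend about $2d$ per interval rather than $3d$, and each $J$ spoils about $3d$ rather than $6d$. The price is the integer bookkeeping you flagged.

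That bookkeeping closes with no loss. Writing $J=[a,b]$, in $\mathbb{Z}$ one has $\widehat J=[a-d+1,b+d-1]\cap I$, which has at most $3d-1$ points, and there are at most $m+1$ gaps. Hence
\[
\sum_s \ell_s\;\geq\;(q-p+1)-m(3d-1)-(m+1)\;=\;q-p-3dm\;\geq\;3d(m+k+1).
\]
This is comfortably above the $2d(m+k+1)$ your final count needs, so no fallback is required.

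One small precision for (ii) across gaps: $\widehat J$ itself has length only $b-a+2d-2$, not $2d$. What you actually use is that the endpoints of the two adjacent gaps lie outside $[a-d+1,b+d-1]$. They are therefore at distance at least $b-a+2d\geq 2d$ from each other.
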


\begin{proof} Divide $I$ up into consecutive disjoint intervals of length $3d$, with at most one interval at the end whose length is at most $3d$. So there are at least $2m+k+1$ many such intervals. Now $\bigcup \mathcal{J}$ can intersect at most $2m$ many such intervals. Thus there are at least $k$ many such intervals with no intersections with $\bigcup\mathcal{J}$. Let $\mathcal{K}$ be the middle $\frac{1}{3}$ of these $k$ many intervals of length $3d$. Then it is easy to verify that (i)--(iii) hold for $\mathcal{K}$.
\end{proof}

We now give a multi-dimensional version adapted to polyhedral bases.

\begin{lemma}\label{lem:1directionmultiple}
Let $m\geq 0$ be an integer and let $n, d, k, H\geq 1$ be positive integers
with $1\leq i\leq n$. Let $R$ be an $n$-dimensional rectangle in
$\mathbb{Z}^n$ and let $v=(\nu_1,\dots,\nu_n)\in\mathbb{Z}^n$ with
$\nu_i\neq 0$ and $d>H+2\|v\|_2$. Let $\mathcal{Q}$ be a set of $k$ many
$(n-1)$-dimensional polyhedra in $\mathbb{R}^n$ such that each
$Q\in\mathcal{Q}$ is contained in
$\Hull(F_i^+(R))\cup\Hull(F_i^-(R))$ and has normal vector $e_i$.
Let $\mathcal{J}$ be a collection of $m$ subintervals of $\pi_i(R)$,
each of length at most $d$. Suppose $R$ has side length at least
$3d(2m+k+1)$ in direction $e_i$. Then there is an assignment
$Q\mapsto a_Q$ from $\mathcal{Q}$ to $\mathbb{Z}$ such that
\begin{enumerate}
\item[(1)] for each $Q\in\mathcal{Q}$,
$\pi_i(P(Q+a_Q v, v, i, H))\subseteq
[\pi_i(F_i^-(R)), \pi_i(F_i^+(R))]$;
\item[(2)] for distinct $Q_1, Q_2\in\mathcal{Q}$,
$\rho(P(Q_1+a_{Q_1} v, v, i, H), P(Q_2+a_{Q_2} v, v, i, H))\geq d$;
\item[(3)] for any $J\in\mathcal{J}$ and $Q\in\mathcal{Q}$,
$\rho(P(Q+a_Q v, v, i, H),
\{x\in\mathbb{Z}^n\colon \pi_i(x)\in J\})\geq d$.
\end{enumerate}
\end{lemma}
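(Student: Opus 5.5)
We reduce to Lemma~\ref{lem:dim1packaging} applied to the interval $I=\pi_i(R)$. By hypothesis it has length at least $3d(2m+k+1)$, and $\mathcal{J}$ consists of $m$ subintervals of length at most $d$. Lemma~\ref{lem:dim1packaging} then produces $k$ pairwise disjoint subintervals $K_Q\subseteq I$, indexed by $Q\in\mathcal{Q}$ through an arbitrary bijection. Each $K_Q$ has length at least $d$, distinct $K_Q$'s are at distance at least $d$, and each $K_Q$ is at distance at least $d$ from every $J\in\mathcal{J}$. In fact the construction gives more: each $K_Q$ is the middle third of a block of length $3d$ that misses $\bigcup\mathcal{J}$. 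We may therefore shrink or enlarge slightly inside that block. The plan is to choose $a_Q$ so that the $e_i$-projection of the slab $P(Q+a_Qv,v,i,H)$ lies inside $K_Q$.

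\textbf{Step 1: placing each $Q$ inside its interval.} Since $Q$ has normal vector $e_i$, $\pi_i(Q)=\{c_Q\}$ is a single point. Here $c_Q$ is one of the two endpoints of $I$, because $Q\subseteq \Hull(F_i^+(R))\cup\Hull(F_i^-(R))$. Translating by $a_Qv$ moves this value to $c_Q+a_Q\nu_i$. The slab $P(Q+a_Qv,v,i,H)$ then has $e_i$-projection $[c_Q+a_Q\nu_i,\,c_Q+a_Q\nu_i+H]$ if $\nu_i>0$, and the mirror interval if $\nu_i<0$. As $a_Q$ ranges over $\mathbb{Z}$, the values $c_Q+a_Q\nu_i$ are spaced $|\nu_i|\le\|v\|_2$ apart. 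We then choose $a_Q$ so that this projection sits inside the $3d$-block containing $K_Q$, starting within $|\nu_i|$ of the left end of $K_Q$. This works because $d>H+2\|v\|_2$, so an interval of length $H$ plus a rounding error of at most $\|v\|_2$ fits in a window of length $d$. To keep the original spacing, it is cleanest to require the projection to lie within the middle third enlarged by less than $d$ on each side, and then verify the distances directly. Alternatively, we first rerun the proof of Lemma~\ref{lem:dim1packaging} with blocks of length $3d$ and take $K_Q$ exactly as given, placing the slab inside $K_Q$; the room exists since $|K_Q|\ge d>H+\|v\|_2$. Property (1) follows because $K_Q\subseteq I=[\pi_i(F_i^-(R)),\pi_i(F_i^+(R))]$.

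\textbf{Step 2: transferring distances.} For any two sets $A,B\subseteq\mathbb{R}^n$ we have $\rho(A,B)\ge \rho(\pi_i(A),\pi_i(B))$, since the Euclidean distance dominates any single coordinate difference. Given that the projections of the slabs lie in $K_Q$, (2) follows from (ii) of Lemma~\ref{lem:dim1packaging}. Likewise (3) follows from (iii), because $\{x:\pi_i(x)\in J\}$ projects into $J$.

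The only delicate point is Step 1: making sure that the discrete steps of size $|\nu_i|$ allow the slab of $e_i$-height $H$ to fit entirely inside $K_Q$. This uses exactly the hypothesis $d>H+2\|v\|_2$ together with $|K_Q|\ge d$, where the factor $2$ leaves slack for rounding at either end.
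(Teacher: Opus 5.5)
Your proposal is correct and follows the paper's proof. You apply Lemma~\ref{lem:dim1packaging} to $\pi_i(R)$ and choose $a_Q$ so that the $e_i$-projection of the slab lies inside $K_Q$; this is possible because the admissible starting values form an interval of length $d-H>2\|v\|_2\geq|\nu_i|$. You then transfer the separations via $\rho(A,B)\geq\rho(\pi_i(A),\pi_i(B))$.

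Commit to that placement and drop the ``enlarged by less than $d$ on each side'' variant. Enlarging the middle third by more than $1$ can bring a slab within distance less than $d$ of some $J$ (or of another slab), so (2) and (3) would fail.
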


\begin{proof}
Without loss of generality assume $i=n$. Apply
Lemma~\ref{lem:dim1packaging} to obtain a collection $\mathcal{K}$ of $k$
pairwise disjoint intervals in $\pi_n(R)$ with properties (i)--(iii).
Let $Q\mapsto K_Q$ be a bijection from $\mathcal{Q}$ to $\mathcal{K}$.
Since each $Q\in\mathcal{Q}$ has normal vector $e_n$ and
$d>H+2\|v\|_2$, we can choose $a_Q\in\mathbb{Z}$ so that
$\pi_n(P(Q+a_Q v, v, n, H))\subseteq K_Q$. The spacing properties (2) and (3) then follow from the corresponding
separation of the intervals in $\mathcal{K}$.
\end{proof}

\subsection{A division lemma for polyhedra}

\begin{lemma}\label{lem:division}
Let $n, L\geq 1$ be positive integers. Let $R\subseteq\mathbb{R}^n$ be an
$n$-dimensional $\leq\!\! p$-hedron and let $S\subseteq R$ be the union of $L$
many $n$-dimensional $\leq\!\! q$-hedra. Then $R\setminus \!S$ can be written as
the union of at most $2^{qL}$ pairwise interior-disjoint $n$-dimensional
$\leq\!\! (p+qL)$-hedra.
\end{lemma}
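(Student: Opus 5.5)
The plan is a direct induction on $L$, peeling off one $\leq\! q$-hedron at a time and using the complements of its defining half-spaces. Write $S=S_1\cup\cdots\cup S_L$ with
\[
S_\ell=\bigcap_{r=1}^{q}\operatorname{HS}(P_{\ell,r},v_{\ell,r}).
\]
The basic observation is that $R\setminus S_1$ is covered by the $q$ sets $R\cap \operatorname{HS}(P_{1,r},-v_{1,r})$. These overlap, so I will instead use the standard disjointification. The $r$-th piece is
\[
A_r=R\cap \operatorname{HS}(P_{1,1},v_{1,1})\cap\cdots\cap \operatorname{HS}(P_{1,r-1},v_{1,r-1})\cap \operatorname{HS}(P_{1,r},-v_{1,r}).
\]
That is, the point satisfies the first $r-1$ inequalities and violates (weakly) the $r$-th. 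Each $A_r$ is an intersection of at most $p+r\le p+q$ closed half-spaces. The union of the $A_r$ is the closure of $R\setminus S_1$. Any two distinct $A_r,A_{r'}$ with $r<r'$ meet only inside $P_{1,r}$, so they are interior-disjoint. I discard the $A_r$ that are empty or lower-dimensional; they contribute nothing to interiors, and $R\setminus S_1$ (up to boundary) is still covered by the rest. This handles $L=1$ with at most $q\le 2^q$ pieces.

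For the inductive step, suppose the closure of $R\setminus(S_1\cup\dots\cup S_{L-1})$ is a union of at most $2^{q(L-1)}$ interior-disjoint $n$-dimensional $\leq\!(p+q(L-1))$-hedra $R_1,\dots,R_N$. Then $R\setminus S\subseteq\bigcup_s (R_s\setminus S_L)$. I apply the $L=1$ case to each $R_s$ with $S_L$. Note that $S_L\cap R_s$ need not be contained in $R_s$, but the construction above never used $S_1\subseteq R$, only that $S_1$ is an intersection of $q$ half-spaces. Each $R_s$ then splits into at most $q\le 2^q$ pieces, each a $\leq\!(p+qL)$-hedron. Pieces inside the same $R_s$ are interior-disjoint by the $L=1$ argument. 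Pieces from different $R_s$ are interior-disjoint because they are contained in interior-disjoint sets. The total count is at most $2^{q(L-1)}\cdot 2^q=2^{qL}$.

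The main point needing care is the meaning of ``written as the union'': $R\setminus S$ is not closed, while the pieces are closed polyhedra. I will interpret the statement, as the later applications surely need, as saying that the pieces cover $R\setminus S$ and lie in $\overline{R\setminus S}$. Covering holds because any $x\in R\setminus S_1$ violates some first inequality $r$ strictly, and so lies in $A_r$. Containment holds since $A_r\subseteq \overline{R\setminus S_1}$ whenever $A_r$ is full-dimensional. The same reasoning applies at each inductive stage. I also need to check that discarding lower-dimensional pieces loses no points of $R\setminus S$. If $x$ is in a degenerate $A_r$, then $x$ strictly violates an inequality of $S_1$, hence has a neighbourhood in $R\setminus S_1$. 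Within $R$ that neighbourhood has interior points, and those lie in some full-dimensional piece. Since the pieces are finitely many closed sets, $x$ lies in the closure of such a piece and hence in the piece itself. Apart from this bookkeeping, the argument is elementary.
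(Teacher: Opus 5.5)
Your proof is correct and follows the paper's skeleton: induction on $L$, refining each piece from the previous stage by the $q$ half-spaces of the new hedron. The facet count is $p+q(L-1)+q$ and the piece count is $2^{q(L-1)}\cdot 2^q$, as in the paper.

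The difference is the base-case decomposition. The paper cuts $R\setminus S_1$ along all sign patterns of the $q$ hyperplanes, i.e.\ into the cells of the hyperplane arrangement inside $R$. That is where $2^q$ comes from, and it does not depend on how the half-spaces are ordered. You instead use the ``first violated inequality'' pieces $A_r$, each of which is a union of such cells. This depends on an ordering, but gives only $q$ pieces per step and hence the sharper bound $q^L\le 2^{qL}$. Interior-disjointness is equally immediate: for $r<r'$ the pieces lie on opposite closed sides of $P_{1,r}$.

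Your proposal also deals with two points the paper's proof passes over.
\begin{itemize}
\item $R\setminus S$ is not closed. The lemma therefore has to be read as ``the pieces cover $R\setminus S$ and are contained in $\overline{R\setminus S}$''.
\item Empty or lower-dimensional cells must be discarded to get $n$-dimensional hedra. The paper's sign-pattern cells have exactly the same issue and it is not mentioned there.
\end{itemize}

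Two small slips.
\begin{itemize}
\item Before discarding, $\bigcup_r A_r$ equals $R\setminus\operatorname{int}S_1$, not $\overline{R\setminus S_1}$. For example, if $R=S_1$ it is $\partial R$, while $\overline{R\setminus S_1}=\varnothing$. Equality with the closure holds only after the discarding step, which your final paragraph does establish.
\item ``$S_L\cap R_s$ need not be contained in $R_s$'' should read ``$S_L$ need not be contained in $R_s$''.
\end{itemize}
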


\begin{proof}
We proceed by induction on $L$. For $L=1$, removing one $\leq\!\! q$-hedron
from a $\leq\!\! p$-hedron: the removed set is the intersection of $q$ closed
half-spaces, so its complement within $R$ is the union of at most $q$
regions, each bounded by one of the $q$ hyperplanes (with reversed
orientation) together with the original $p$ hyperplanes. Taking all
possible sign combinations for the $q$ hyperplanes yields at most $2^q$
polyhedra, each bounded by at most $p+q$ hyperplanes.

Now assume the claim for $L-1$. Write $S=S_{L-1}\cup T$ where $S_{L-1}$
is the union of $L-1$ many $\leq\!\! q$-hedra and $T$ is a single
$\leq\!\! q$-hedron. By the induction hypothesis, $R\setminus S_{L-1}$ is the
union of at most $2^{q(L-1)}$ pairwise interior-disjoint
$\leq\!\! (p+q(L-1))$-hedra. Removing $T$ from each of these splits each
component into at most $2^q$ pieces, each bounded by at most
$(p+q(L-1))+q=p+qL$ hyperplanes. The total count is at most
$2^{q(L-1)}\cdot 2^q=2^{qL}$.
\end{proof}

\subsection{Finitely many normals}

The following lemma ensures that the class of normal vectors arising in
our construction remains finite, which is essential for controlling the
combinatorial complexity of the polyhedra we need to handle.

\begin{lemma}\label{lem:finite_normals}
Let $n\geq 2$ and let
$\mathcal{N}=\{e_1,\dots,e_n, v_1,\dots,v_K\}\subseteq\mathbb{Z}^n$ and
$W\subseteq\mathbb{Z}^n$ be finite sets of integer vectors. Consider all hyperplanes in $\mathbb{R}^n$ whose normal vectors
belong to $W$. Let $L$ be the intersection of two such hyperplanes with
$\dim L=n-2$. If $\widetilde{H}$ is a hyperplane obtained by translating
$L$ parallel to some vector $v\in\mathcal{N}$, then the primitive integer
normal vector of $\widetilde{H}$ is unique up to sign and belongs to a
finite set depending only on $\mathcal{N}$ and $W$.
\end{lemma}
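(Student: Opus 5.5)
The plan is a direct linear-algebra computation. Fix two hyperplanes $P_1,P_2$ with normals $w_1,w_2\in W$. Since $\dim(P_1\cap P_2)=n-2$, the vectors $w_1,w_2$ are linearly independent, and the direction space of $L=P_1\cap P_2$ is
\[
L_0=\{x\in\R^n\colon x\cdot w_1=x\cdot w_2=0\}.
\]
I read ``translating $L$ parallel to $v$'' as sweeping: $\widetilde H=L+\R v$. For this to be a hyperplane we need $v\notin L_0$. Then the direction space of $\widetilde H$ is $L_0+\R v$, which has dimension $n-1$.

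A normal $u$ of $\widetilde H$ must be orthogonal to $L_0$, so $u\in L_0^\perp=\operatorname{span}(w_1,w_2)$. Write $u=\alpha w_1+\beta w_2$. It must also satisfy $u\cdot v=0$, that is,
\[
\alpha(w_1\cdot v)+\beta(w_2\cdot v)=0 .
\]
Because $v\notin L_0$, the pair $(w_1\cdot v,\,w_2\cdot v)$ is nonzero. So the solution space in $(\alpha,\beta)$ is one-dimensional, spanned by $(\alpha,\beta)=(w_2\cdot v,\,-w_1\cdot v)$. This gives the explicit normal
\[
u=(w_2\cdot v)\,w_1-(w_1\cdot v)\,w_2\in\Z^n .
\]
The vector $u$ is nonzero, since $w_1,w_2$ are independent and the coefficients are not both zero. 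Every normal of $\widetilde H$ is a real multiple of $u$, so $\widetilde H$ has a unique primitive integer normal up to sign, namely $u/\gcd(u)$.

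For finiteness, $u$ is determined by the triple $(w_1,w_2,v)\in W\times W\times\mathcal N$, which ranges over a finite set. Hence the primitive normals lie in the finite set
\[
\bigl\{\pm u/\gcd(u)\colon w_1,w_2\in W,\ v\in\mathcal N,\ u\neq 0\bigr\},
\]
which depends only on $\mathcal N$ and $W$.

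The only delicate point is interpretation. If ``translating'' meant a single translate $L+tv$, that set has dimension $n-2$ and is not a hyperplane, so the sweep reading is the one that makes the statement meaningful. I would state this reading explicitly and note the nondegeneracy condition $v\notin L_0$. That condition is automatic whenever $\widetilde H$ is assumed to be a hyperplane.

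The same argument works when $n=2$. There $L$ is a point and $\widetilde H$ is the line through it in direction $v$, and the formula reduces to a rotation of $v$ by $90^\circ$.
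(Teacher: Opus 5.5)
Your proof is correct and follows the paper's argument: both place the normal in $\operatorname{span}\{w_1,w_2\}\cap v^\perp$, use $v\notin L_0$ to make this intersection one-dimensional, and get finiteness from the finitely many triples $(w_1,w_2,v)$. Your explicit formula $u=(w_2\cdot v)\,w_1-(w_1\cdot v)\,w_2$ adds something useful, since it makes the integrality of the normal immediate, whereas the paper only asserts that $u$ can be chosen integral.
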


\begin{proof}
Let $H_1, H_2$ be hyperplanes with normals $w_1, w_2\in W$, and let
$L=H_1\cap H_2$. Let $u$ be a normal vector of $\widetilde{H}$. Let $V$
be the $(n-2)$-dimensional subspace of direction vectors parallel to $L$.

Since $\widetilde{H}$ contains a translate of $L$ and is parallel to $v$,
we have $u\perp V$ and $u\perp v$. By definition, $w_1\perp V$ and
$w_2\perp V$. Since $\dim V^\perp=2$, we have
$V^\perp=\operatorname{span}\{w_1,w_2\}$. Thus
\[
u\in \operatorname{span}\{w_1,w_2\}\cap v^\perp.
\]

We claim $\operatorname{span}\{w_1,w_2\}\not\subseteq v^\perp$. If it were,
then $v\in V$, meaning $v$ is parallel to $L$, so translating $L$ along
$v$ would not create an hyperplane, contradiction.
Hence by the dimension formula,
\[
\dim(\operatorname{span}\{w_1,w_2\}\cap v^\perp)=2+(n-1)-n=1.
\]
Thus the intersection is one-dimensional, and $u$ is uniquely determined
up to scaling. Since $w_1, w_2, v$ are integer vectors, $u$ can be chosen
as an integer vector, and its primitive integer normal is unique up to sign.

Since $\mathcal{N}$ and $W$ are finite, there are only finitely many
choices for the triple $(w_1, w_2, v)$, so the set of possible primitive
normal vectors $u$ is finite and depends only on $\mathcal{N}$ and $W$.
\end{proof}

\subsection{The main technical lemma}

The following is the central technical lemma of the paper. Given an
$n$-dimensional polyhedron $R'$, we construct a family of
$(n-1)$-dimensional cross-sectional polyhedra that serve as bases for
marker packages.

\begin{lemma}\label{lem:mainpoly}
Let $n, d, m, H\geq 1$ be positive integers, let $1\leq i\leq n$, let
$v=(\nu_1,\dots,\nu_n)\in \mathbb{Z}^n$ with $\nu_i\neq0$, let
$v_1,\dots, v_m\in \Z^n$, and let $0<t_0<1$ satisfy
\[
t_0\leq\min\left\{\frac{\proj_{v_j} v}{v_j}>0\colon 1\leq j\leq m\right\},
\]
where $\frac{\proj_u w}{u}$ denotes the unique scalar $\lambda$ such that the
orthogonal projection of $w$ onto the line spanned by $u$ equals $\lambda u$.
If $d t_0>40m\|v\|_\infty^2 H$, then for any $n$-dimensional
$\leq\! m$-hedron $R'=\operatorname{PH}((P_1,v_1),\dots,(P_m,v_m))$ in
$\mathbb{R}^n$, there exist
\[
K\leq m\left\lceil \frac{8m|\pi_i(R')|\cdot\|v\|_\infty}{d}+1\right\rceil
\]
and a finite sequence of $(n-1)$-dimensional polyhedra
$\{B_k\}_{1\leq k\leq K}$ in $\mathbb{R}^n$ such that
\begin{enumerate}
\item[(1)] each $B_k$ has $e_i$ as its normal vector;
\item[(2)] for any $1\leq k\neq k'\leq K$,
$\rho(P(B_k, v, i, H),P(B_{k'}, v, i, H))\geq \dfrac{d t_0}{10m\|v\|_\infty}$;
\item[(3)] for any $1\leq k\leq K$,
$\dfrac{d}{3}\geq\rho(P(B_k, v, i, H),R')\geq \dfrac{d t_0}{10m\|v\|_\infty}$;
\item[(4)] for any $1\leq i'\leq n$,
$|\pi_{i'}(B_k)|\leq |\pi_{i'}(R')|+\dfrac{d}{8m}$;
\item[(5)] for any $x\in\mathbb{R}^n$, if $x$ crosses $R'$ in direction $v$,
then there exists $1\leq k\leq K$ such that $x$ crosses $B_k$ in direction
$v$.
\end{enumerate}
\end{lemma}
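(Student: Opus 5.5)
The plan is to treat each face of $R'$ separately and push a slab of that face outward along its normal. This is where the hypothesis on $t_0$ enters. Every line in direction $v$ that meets $R'$ enters $R'$ through some face $F_j=P_j\cap R'$ whose outward normal $v_j$ satisfies $v\cdot v_j>0$, measured backwards along $-v$. Equivalently, it exits through a face with $v\cdot v_j>0$. So I will only handle the faces $F_j$ with $v\cdot v_j>0$, which are at most $m$ in number; this accounts for the factor $m$ in the bound on $K$.

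Fix such a face $F_j$ and move it outward along $v$ by a distance comparable to $d/(c\|v\|_\infty)$; the scale must keep the separation $\ge dt_0/(10m\|v\|_\infty)$ in (3) and the upper bound $d/3$. Concretely, consider the region
\[
G_j=\{y+sv\colon y\in F_j,\ s\in[s_0,s_1]\},
\]
with $s_0,s_1$ chosen so that $s_0\,(v\cdot v_j)/\|v_j\|\ge dt_0/(10m\|v\|_\infty)$ plus an allowance of $H$ plus the thickness in the $v$ direction. Here $t_0$ bounds $(v\cdot v_j)/\|v_j\|^2$ from below, so the normal displacement of $F_j+sv$ is at least $s\,t_0\|v_j\|$. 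Every line crossing $F_j$ in direction $v$ meets $G_j$ in a segment whose $e_i$-projection has length $(s_1-s_0)|\nu_i|$. I will choose $s_1-s_0$ so that this length is at least $d':=d/(8m\|v\|_\infty)$, roughly.

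Next apply Lemma~\ref{lem:layer} to $G_j$ with $d'$ in place of $d$. This yields hyperplane slices $E^{(j)}_1,\dots$ with normal $e_i$, about $|\pi_i(G_j)|/d'+1$ of them, and every line crossing $F_j$ in direction $v$ crosses one of them. I then intersect each slice with $G_j$, or with a slightly enlarged copy, to obtain the bounded bases $B_k$. The count $|\pi_i(G_j)|\le|\pi_i(R')|+O(d/m)$ gives the stated $K$ bound. Property (4) holds because $G_j$ is $F_j$ translated by at most $s_1\|v\|_\infty\le d/(8m)$ in each coordinate. Property (1) holds by construction. Property (5) follows from the face-entry observation above.

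The main obstacle is verifying (2) and (3) for the prisms $P(B_k,v,i,H)$. Within one face the slices are $d'$ apart in the $e_i$-coordinate and have $e_i$-height $H$; since $dt_0>40m\|v\|_\infty^2H$, the height $H$ is small relative to $d'$, leaving a gap of order $dt_0/(10m\|v\|_\infty)$. Prisms belonging to different faces $F_j,F_{j'}$ could still be close near a shared edge. To handle this I would try nesting the $s$-ranges: assign face $j$ the shell band with index $j$, at normal depths $\approx (j)\cdot dt_0/(10m\|v\|_\infty)$, so that different faces' regions are separated in the outward direction. I would then use the $t_0$ bound to convert normal separation into Euclidean separation of the prisms. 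Checking this edge case carefully, with constants $10$, $8$ and $40$, is where I expect the real work.
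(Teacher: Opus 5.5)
Your overall construction matches the paper's. You keep only the faces with $v\cdot v_j>0$, since a line in direction $v$ that meets $R'$ leaves through one of them. You push each such $F_j$ out along $v$ into a slab, cut the slab by hyperplanes with normal $e_i$ via Lemma~\ref{lem:layer}, and nest the slabs by face index. Intersecting the slices with the slab, as you do, is actually necessary, because that lemma produces whole hyperplanes. The paper takes $Q_j=\{y+sv\colon y\in F_j,\ (2j-1)\tau\le s\le 2j\tau\}$ with $\tau=d/(8m\|v\|_\infty)$.

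The gap is in the step you postpone. It is the only step that needs an idea, and the calibration you sketch for it does not work.
Spacing the bands by normal depth $\approx j\,dt_0/(10m\|v\|_\infty)$, each depth measured along that face's own unit normal $\hat v_j=v_j/\|v_j\|_2$, does not order the bands relative to one another. Depth relative to $P_j$ says nothing about position relative to $P_{j'}$. In terms of the $v$-parameter $s=(\text{depth})/(v\cdot\hat v_j)$, the order of two bands can even reverse. Moreover, the thickness you need for the count, $s_1-s_0\ge d/(8m\|v\|_\infty|\nu_i|)$, gives normal thickness at least $dt_0/(8m\|v\|_\infty|\nu_i|)$. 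When $|\nu_i|=1$ this already exceeds your spacing. Your argument for (4) also clashes with nesting: outer bands are translated by far more than $d/(8m)$ in some coordinate. Only the thickness matters, via $|\pi_{i'}(G_j)|\le|\pi_{i'}(F_j)|+(s_1-s_0)|\nu_{i'}|$.

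The missing idea is to nest in the parameter $s$ itself and to separate two bands using the half-space of the \emph{outer} face.
For $j<j'$ put $\phi(x)=(x-p_{j'})\cdot\hat v_{j'}$ with $p_{j'}\in P_{j'}$. This $\phi$ is $1$-Lipschitz and $\phi\le 0$ on $R'$.
A point of a prism built on a slice of $Q_j$ has the form $y+sv$ with $y\in F_j\subseteq R'$ and $s\le 2j\tau+H/|\nu_i|$, so $\phi\le s\,(v\cdot\hat v_{j'})$ there. Every point of a prism from $Q_{j'}$ has $\phi\ge(2j'-1)\tau\,(v\cdot\hat v_{j'})$.
Since $\|v_{j'}\|_2\ge 1$, we have $v\cdot\hat v_{j'}\ge (v\cdot v_{j'})/\|v_{j'}\|_2^2\ge t_0$. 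Hence the two prisms are at distance at least $(\tau-H/|\nu_i|)(v\cdot\hat v_{j'})\ge(\tau-H)t_0$. By $dt_0>40m\|v\|_\infty^2H$ this exceeds $\frac{dt_0}{8m\|v\|_\infty}-\frac{dt_0}{40m\|v\|_\infty}=\frac{dt_0}{10m\|v\|_\infty}$. No analysis near shared edges is required, and the same $\phi$ with $j'=j$ gives the lower bound in (3).

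Within one band, slice at spacing $\tau|\nu_i|$ (the full $e_i$-length of a crossing segment) rather than $d/(8m\|v\|_\infty)$. This yields the $e_i$-gap $\tau|\nu_i|-H>\frac{d}{10m\|v\|_\infty}$ and at most $\lceil|\pi_i(R')|/(\tau|\nu_i|)+1\rceil$ slices per band. Your spacing would give $+|\nu_i|$ in place of $+1$.

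Finally, on the upper bound $d/3$ in (3): the outer bands may sit at Euclidean distance of order $\frac{d\|v\|_2}{4\|v\|_\infty}$ from $R'$. The paper bounds the displacement length by $\frac{(2m-1)d}{8m}$, which tacitly measures $v$ in $\|\cdot\|_\infty$. So this bound needs separate care in either version.
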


\begin{proof}
Without loss of generality assume $i=n$.  Among the $m$ face normals of $R'$,
some may not have a positive projection of $v$ onto them, and some of the
corresponding hyperplanes may be redundant (they do not touch $R'$).  Assume 
\begin{align*}
1\leq j\leq m' &\iff \exists q>0\Bigl(\proj_{v_j} v=q v_j\Bigr),\\
1\leq j\leq m'' &\iff \bigl(j\leq m'\;\wedge\; P_j\cap R'\neq\varnothing\bigr),
\end{align*}
so $m''\leq m'\leq m$.  After renumbering we may assume the first $m''$
indices are exactly those with genuine faces whose normals admit a positive
projection of $v$; faces with index $j>m''$ play no further role. Our proof takes several steps.

\medskip\noindent
\textbf{Step 1.  Displacing the faces.}
For each $1\leq j\leq m''$, let $F_j=P_j\cap R'$ be the corresponding face
of $R'$.  Translate $F_j$ outward along $v$ by a distance proportional to
its index:
\[
P_j' = F_j+\frac{(2j-1)d}{8m\|v\|_\infty}\,v .
\]
The translated faces $\{P_j'\}$ satisfy the following three properties.
\begin{itemize}
\item[(i)] For $j\neq j'$, the displacement parameters differ by at least
$\frac{2d}{8m\|v\|_\infty}$.  The component of $v$ orthogonal to the $j$-th
face is measured by $\proj_{v_j}v$; after normalising, the definition of
$t_0$ guarantees this orthogonal component is at least $t_0$.  Hence
$\rho(P_j',P_{j'}') \geq \frac{d t_0}{4m\|v\|_\infty}$.
\item[(ii)] $\frac{(2m-1)d}{8m}\geq\rho(R', P_j') \geq \frac{d t_0}{8m\|v\|_\infty}$.
The upper bound holds because the displacement length is at most
$\frac{(2m''-1)d}{8m}\leq\frac{(2m-1)d}{8m}$; the lower bound follows from
the same projection estimate as in (i), applied to the smallest displacement
$j=1$.
\item[(iii)] Translating a face along $v$ preserves the set of lines parallel
to $v$ that intersect it.  Hence any $x$ that crosses $F_j$ in direction $v$
also crosses $P_j'$ in direction $v$.
\end{itemize}

\medskip\noindent
\textbf{Step 2.  Forming $n$-dimensional packages and slicing.}
For each $1\leq j\leq m''$, thicken $P_j'$ in direction $v$ to
$e_n$-height $\frac{d}{8m\|v\|_\infty}|\nu_n|$:
\[
Q_j=P\Bigl(P_j',\, v,\, n,\; \frac{d}{8m\|v\|_\infty}|\nu_n|\Bigr)
=\bigcup_{0\leq t\leq \frac{d}{8m\|v\|_\infty}}(P_j'+t v).
\]
For any point $x\in Q_j$, write $x=p+t_0 v$ with $p\in P_j'$.  The line
through $x$ parallel to $v$ is $\{p+sv\colon s\in\mathbb{R}\}$, and its
intersection with $Q_j$ is exactly $\{p+sv\colon
0\leq s\leq\frac{d}{8m\|v\|_\infty}\}$, whose $e_n$-projection has length
$\frac{d}{8m\|v\|_\infty}|\nu_n|$.  Thus Lemma~\ref{lem:layer} applies to
each $Q_j$ with direction $v$ and parameter $\frac{d}{8m\|v\|_\infty}|\nu_n|$,
yielding a finite collection $\mathcal{Q}_j$ of $(n-1)$-dimensional polyhedra.
These collections satisfy:
\begin{itemize}
\item[(iv)] every $Q\in\bigcup_j\mathcal{Q}_j$ has $e_n$ as normal vector;
\item[(v)] for distinct $Q,Q'\in\bigcup_j\mathcal{Q}_j$, if they belong to
the same $\mathcal{Q}_j$ then Lemma~\ref{lem:layer}(2) gives
$\rho(Q,Q')\geq\frac{d}{8m\|v\|_\infty}|\nu_n|$; if they belong to different
$\mathcal{Q}_j$, $\mathcal{Q}_{j'}$, the separation follows from (i) together
with the fact that $Q_j$ and $Q_{j'}$ are obtained by translating their
respective bases by at most $\frac{d}{8m\|v\|_\infty}v$.  In either case
$\rho(Q,Q')\geq\frac{d t_0}{8m\|v\|_\infty}$;
\item[(vi)] Lemma~\ref{lem:layer}(2) and the displacement bounds (ii) yield
$\frac{d}{4}\geq\rho(R',Q)\geq\frac{d t_0}{8m\|v\|_\infty}$;
\item[(vii)] the $e_{i'}$-projection of $Q_j$
is controlled by $\pi_{i'}(R')$ and the thickness,
hence $|\pi_{i'}(Q)|\leq |\pi_{i'}(R')|+\frac{d|\nu_i|}{8m\|v\|_{\infty}}\leq |\pi_{i'}(R')|+\frac{d}{8m}$ for every
$Q\in\bigcup_j\mathcal{Q}_j$;
\item[(viii)] if $x$ crosses $R'$ in direction $v$, then $x$ crosses some
face $F_j$ ($1\leq j\leq m''$) in direction $v$.  By (iii), $x$ crosses
$P_j'$, hence crosses $Q_j$.
Lemma~\ref{lem:layer}(3) then supplies some $Q\in\mathcal{Q}_j$ crossed by
$x$ in direction $v$;
\item[(ix)] Lemma~\ref{lem:layer} gives
$|\mathcal{Q}_j|\leq\bigl\lceil |\pi_n(Q_j)|/
\bigl(\frac{d}{8m\|v\|_\infty}|\nu_n|\bigr)\bigr\rceil$.  Since
$|\pi_{i'}(Q)|\leq |\pi_{i'}(R')|+\frac{d|\nu_n|}{8m\|v\|_{\infty}}$, we obtain
$|\mathcal{Q}_j|\leq
\bigl\lceil\frac{8m|\pi_n(R')|\cdot\|v\|_\infty}{d}+1\bigr\rceil$.
\end{itemize}

\medskip\noindent
\textbf{Step 3.  Assembling the $B_k$ and verifying (1)--(5).}
Enumerate $\bigcup_{j=1}^{m''}\mathcal{Q}_j=\{B_1,\dots,B_K\}$.  Then
$K=\sum_{j=1}^{m''}|\mathcal{Q}_j|
\leq m\bigl\lceil\frac{8m|\pi_n(R')|\cdot\|v\|_\infty}{d}+1\bigr\rceil$.

Property (1) is immediate from (iv).  Property (4) and (5) are exactly (vii) and (viii).

For property (2), let $k\neq k'$.  The distance between the bases $B_k$ and
$B_{k'}$ is at least $\frac{d t_0}{8m\|v\|_\infty}$ by (v).  Extending each
base to a package $P(B_k,v,n,H)$ adds at most $H\|v\|_\infty$ to the distance
in the worst case (when the two packages are shifted toward each other).  The
hypothesis $d t_0>40m\|v\|_\infty^2 H$ ensures this perturbation is less than
$\frac{d t_0}{40m\|v\|_\infty}$, so after collecting constants the spacing
remains at least $\frac{d t_0}{10m\|v\|_\infty}$.

For property (3), the upper bound holds because each $B_k$ originates from a
face $F_j\subseteq R'$, is displaced by at most
$\frac{(2m-1)d}{8m\|v\|_\infty}$ along $v$, and the package adds height
$H$; using $d t_0>40m\|v\|_\infty^2 H$ (which implies
$H<\frac{d}{40m\|v\|_\infty}$) and $t_0<1$, the total distance from $R'$ is
bounded by $\frac{d}{4}+\frac{d}{8m}+\frac{d}{40m}<\frac{d}{3}$.  The lower
bound follows from (vi): extending the base $B_k$ to the full package can
bring points closer to $R'$ by at most $H\|v\|_\infty$, and the hypothesis
prevents this from breaching $\frac{d t_0}{10m\|v\|_\infty}$.

This completes the proof.
\end{proof}

\subsection{An extension lemma}

The following lemma bounds the size of $\delta$-extensions of polyhedra
whose face normals belong to a fixed finite set.

Let $S$ be a finite subset of $\Z^n$, define 
$$\operatorname{cone}(S) = \left\{ \sum_{v \in S} \alpha_v v \;:\; \alpha_v \ge 0 \right\}.$$

\begin{lemma}\label{lem:extension}
Let $m, n$ be positive integers and let $A\subseteq\mathbb{Z}^n$ be a
finite set of non-zero vectors. Then there exists a positive integer
$\lambda(A,m)$ such that for any $\delta>0$ and any $n$-dimensional $\leq\!\! m$-hedron
$H=\operatorname{PH}((P_1,v_1),\dots,(P_m,v_m))$ with $v_i\in A$, the
$\delta$-extension of $H$ is contained in the
$\lambda(A,m)\delta$-neighborhood of $H$.
\end{lemma}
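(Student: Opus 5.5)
The plan is to prove the lemma by a direct nearest-point argument, which is essentially Hoffman's error bound specialized to our situation. The point is that the constant depends only on the set of possible face normals, not on the offsets of the hyperplanes. Normalize the normals: set $u_j=v_j/\|v_j\|_2$, and choose $c_j$ so that $H=\{x\colon u_j\cdot x\le c_j,\ 1\le j\le m\}$. Then the $\delta$-extension is exactly
\[
H_\delta=\{x\colon u_j\cdot x\le c_j+\delta,\ 1\le j\le m\},
\]
since each $P_j$ is pushed outward by Euclidean distance $\delta$. The possible unit normals $u_j$ range over the finite set $\hat A=\{a/\|a\|_2\colon a\in A\}$.

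\textbf{Step 1: nearest point and normal cone.} Fix $x\in H_\delta\setminus H$. Since $H$ is non-empty, closed and convex, let $y\in H$ be the nearest point to $x$ and put $w=x-y\neq 0$. By the standard characterization of projections onto polyhedra (KKT, or Farkas' lemma), $w$ lies in the normal cone of $H$ at $y$. That cone is generated by the active normals $u_j$, those with $u_j\cdot y=c_j$.

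\textbf{Step 2: independent generators.} By the conic Carathéodory theorem, we can write $w=\sum_{j\in J}\mu_j u_j$ with $\mu_j>0$, where $J$ is a set of active indices whose vectors $\{u_j\}_{j\in J}$ are linearly independent. Let $G=(u_j\cdot u_k)_{j,k\in J}$ be their Gram matrix, which is invertible. For each active $j$ we have $u_j\cdot w=u_j\cdot x-c_j\le\delta$. This yields
\[
\|w\|^2=\sum_{j\in J}\mu_j\,(u_j\cdot w)\le \delta\sum_{j\in J}\mu_j\le \delta\sqrt{|J|}\,\|\mu\|_2 .
\]
The vector $\mu$ solves $G\mu=(u_k\cdot w)_{k\in J}$, and each $|u_k\cdot w|\le\|w\|$. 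Hence $\|\mu\|_2\le\|G^{-1}\|\sqrt{|J|}\,\|w\|$. Combining the two estimates gives $\|w\|\le |J|\,\|G^{-1}\|\,\delta\le n\|G^{-1}\|\delta$.

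\textbf{Step 3: uniformity.} The set $\{u_j\}_{j\in J}$ is a linearly independent subset of the finite set $\hat A$. There are only finitely many such subsets, and hence finitely many Gram matrices $G$. So we may take
\[
\lambda(A,m)=\Bigl\lceil n\max_G\|G^{-1}\|\Bigr\rceil+1,
\]
which depends only on $A$ (and trivially on $m$). Repeated normals with different offsets cause no trouble, because only the active constraints, and then an independent subfamily of them, enter the bound. This gives $\rho(x,H)\le\lambda(A,m)\delta$ for every $x\in H_\delta$.

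I expect the main obstacle to be Step 2, namely justifying that $w$ can be written with nonnegative coefficients on a linearly independent family of active unit normals. This needs the normal-cone description of the projection plus conic Carathéodory. Once that representation is in hand, the Gram-matrix estimate is routine.
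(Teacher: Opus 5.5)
Your proof is correct, and its skeleton is the same as the paper's. Both take the nearest point $y\in H$ to $x$ and place $w=x-y$ in the normal cone spanned by the active normals. Both then use $w\cdot v_j/\|v_j\|_2\le\delta$ for the active $j$, and finish by maximizing over finitely many configurations of normals drawn from $A$.

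The difference is in how the set of admissible $w$ is shown to be bounded.

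\textbf{The paper's route.} It keeps the whole active family $S$ and proves that $K_S=\{w\in\cone(S)\colon w\cdot v/\|v\|_2\le 1,\ v\in S\}$ is bounded by compactness. A limiting unit direction $w=\sum\beta_v v$ with $w\cdot v\le 0$ for all $v\in S$ would force $\|w\|_2^2=\sum\beta_v(w\cdot v)\le 0$, a contradiction. This needs neither linear independence nor Carath\'eodory, but it gives no explicit constant. It also ranges over all subsets $S$ with $|S|\le m$, which is why $m$ appears in $\lambda(A,m)$.

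\textbf{Your route.} You first pass, via conic Carath\'eodory, to a linearly independent active subfamily. You then use the same identity $\|w\|^2=\sum\mu_j(u_j\cdot w)$ quantitatively, controlling $\sum\mu_j$ through the inverse Gram matrix. This costs one extra standard lemma, and it buys three things:
\begin{enumerate}
\item the explicit bound $\|w\|_2\le n\max_G\|G^{-1}\|\,\delta$;
\item visible independence from $m$, since only independent sets of size at most $n$ matter;
\item the linear scaling in $\delta$ built in, rather than obtained by the paper's homogeneity reduction to $\delta=1$.
\end{enumerate}
In effect, yours is the quantitative, Hoffman-type version of the paper's compactness argument.
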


\begin{proof}
By homogeneity, it suffices to prove the statement for $\delta=1$; the
general case follows by scaling. Let
$H=\operatorname{PH}((P_1,v_1),\dots,(P_m,v_m))$ with $v_i\in A$, and
let $H_1$ be its $1$-extension. For each plane $P_i$, fix $p_i\in P_i$.

Let $x\in H_1\backslash H$. Since $H$ is closed and convex, there is a unique closest
point $y\in H$ to $x$. Set $u=x-y$. The vector
$u$ lies in the normal cone of $H$ at $y$:
$u=\sum_{i\in I(y)}\alpha_i v_i$ with $\alpha_i\geq 0$, where
$I(y)=\{i\colon y\in P_i\cap H\}$.

Because $x$ belongs to the $1$-extension of $H$, for each $i\in I(y)$
we have
\[
(x-p_i)\cdot v_i\leq \|v_i\|_2.
\]
Since $y\in P_i$, we have $(y-p_i)\cdot v_i=0$, and subtraction gives
\[
u\cdot v_i = (x-y)\cdot v_i \le \norm{v_i}_2,
\]
or equivalently,
\[
u\cdot \frac{v_i}{\norm{v_i}_2} \le 1.
\]

Let $S=\{v_i\colon i\in I(y)\}\subseteq A$. Then $|S|\leq m$ and
$u\in K_S$, where
\[
K_S=\Bigl\{w\in\cone(S)\;:\;
w\cdot\frac{v}{\|v\|_2}\leq 1\text{ for all }v\in S\Bigr\}.
\]
We claim $K_S$ is bounded. If not, there is a sequence
$\{u_k\}\subseteq K_S$ with $\|u_k\|_2\to\infty$. Define
$w_k=u_k/\|u_k\|_2$. By compactness of the unit sphere in $\mathbb{R}^n$,
a subsequence converges to some $w$ with $\|w\|_2=1$. Since $\cone(S)$
is closed, $w\in\cone(S)$, so $w=\sum_{v\in S}\beta_v v$ with
$\beta_v\geq 0$. The defining inequalities give
$w_k\cdot\frac{v}{\|v\|_2}\leq 1/\|u_k\|_2$ for each $v\in S$, and
taking $k\to\infty$ yields $w\cdot v\leq 0$ for all $v\in S$. Then
\[
\|w\|_2^2=w\cdot w=\sum_{v\in S}\beta_v(w\cdot v)\leq 0,
\]
contradicting $\|w\|_2=1$. Therefore $K_S$ is bounded. Being closed and bounded in $\R^n$, it is compact, and we may define $$\lambda_S = \max_{u\in K_S}\norm{u}_2 < \infty.$$

Since $A$ is finite, there are only finitely many subsets
$S\subseteq A$ with $|S|\leq m$. Define
\[
\lambda(A,m)=\max_{\substack{S\subseteq A\\ 1\leq |S|\leq m}}
\lceil\lambda_S\rceil,
\]
which is a well-defined positive integer. Now for $x\in H_1$, with
$S=\{v_i\colon i\in I(y)\}$, we have
$\|x-y\|_2=\|u\|_2\leq\lambda_S\leq\lambda(A,m)$. Thus
$H_1\subseteq H+B(0,\lambda(A,m))$, where $B(0,r)$ is the Euclidean
ball of radius $r$.
\end{proof}

\section{Proof of the main theorem}\label{sec:proof}

In this section we prove Theorem~\ref{thm:main}. The proof adapts the
inductive construction of packages from \cite{GW25}, with polyhedral
packages replacing rectangular ones. For each generating vector, we
construct a family of packages that are pairwise well-separated and such
that every orbit point can reach a marker point by moving along that
generating vector. The construction proceeds in levels (corresponding
to coordinate directions) and rounds (corresponding to generators with
non-zero entry in that coordinate).

\subsection{Fixing parameters}

Fix $n, d_0\geq 1$ and a finite generating set $S\subseteq\mathbb{Z}^n$.
Without loss of generality assume $0\notin S$ and $S$ is symmetric
(i.e., $-v\in S$ whenever $v\in S$). For $1\leq i\leq n$, define
\[
S_i=\{v\in S\colon \pi_i(v)\neq 0\},
\]
and enumerate $S_i=\{v_1^i, v_2^i, \dots, v_{m_i}^i\}$. Without loss of
generality, assume all $m_i>0$ (if some $S_i=\varnothing$, the corresponding
level is vacuous and can be omitted).

Define
\[
t=\max\{\|v\|_{\infty}\colon v\in S\},\qquad
N_0=(4t+5)^n.
\]
Here $N_0$ bounds the number of marker regions of side length $D+1$
that can intersect a $\rho_\infty$-ball of radius $(2t+1)(D+1)$; see the
discussion of square neighborhoods below.

\medskip\noindent
\textbf{Finite normals hierarchy.}
Let $W_0=\{e_1,\dots,e_n\}\cup S$. For $k\geq 1$, define $W_k$ inductively
as the set of all primitive integer normal vectors of hyperplanes obtained
by:
\begin{itemize}
\item taking the intersection of two hyperplanes whose normals belong to
$W_{k-1}$, yielding an $(n-2)$-dimensional affine subspace $L$;
\item translating $L$ parallel to some vector $v\in S$ to obtain a hyperplane.
\end{itemize}
By Lemma~\ref{lem:finite_normals}, each $W_k$ is a finite set depending
only on $S$. Let $Q=m_1+\cdots+m_n$ be the total number of rounds. Set
\[
A=W_Q,\qquad a=|A|.
\]
Applying Lemma~\ref{lem:extension} with this $A$ and $m=a$, we obtain
the constant $\lambda=\lambda(A,a)$.

\medskip\noindent
\textbf{Package height.}
For each $1\leq i\leq n$ and $v\in S_i$,
Lemma~\ref{lem:generalH} provides $H_i(v)=H(n,d_0,v,i)$. Set
\[
H=\max\{H_i(v)+2d_0\|v\|_2\colon 1\leq i\leq n, v\in S_i\}.
\]
Let $t_0$ be a positive real number with $0<t_0<1$ such that
\[
t_0\leq \min\Bigl\{\frac{\proj_u v}{u}>0\colon
v\in S,\; u\in A\Bigr\},
\]
where $\proj_u v$ denotes the projection of $v$ onto $u$. Such $t_0$
exists because $S$ and $A$ are finite sets.

\medskip\noindent
\textbf{Inductive parameters.}
We define sequences of parameters $l_k, b_k, c_k$ for $k=1,\dots, Q$ by
induction. Their roles are:
\begin{itemize}
\item $l_k$: an upper bound for $|\pi_{i'}(P)|/(D+1)$ for all
$1\leq i'\leq n$ and all polyhedra $P$ constructed up to step $k$;
\item $b_k$: an upper bound for the number of polyhedra per marker
region after step $k$;
\item $c_k$: a spacing parameter; after step $k$, any two distinct
polyhedra have Euclidean distance at least $D/c_k$.
\end{itemize}

Set $l_1=1$, $b_1=2$, and let $c_1$ be an integer such that
$c_1>3(2N_0b_1+7)$.

Now assume $l_k, b_k, c_k$ have been defined for some $k<Q$. Let
$c_{k+1}'$ be an integer satisfying
\begin{equation}\label{eq:cprime}
c_{k+1}'>3\lambda c_k.
\end{equation}
Define
\begin{equation}\label{eq:l}
l_{k+1}' = l_k + \frac{2\lambda}{c_{k+1}'},\qquad
l_{k+1} = l_{k+1}' + \frac{\lambda}{8a c_{k+1}'} + H t,
\end{equation}
\begin{equation}\label{eq:b}
b_{k+1}' = a b_k
\Bigl\lceil \frac{8a c_{k+1}' l_{k+1}' t}{\lambda}+2 \Bigr\rceil
+ b_k,\qquad
b_{k+1} = b_{k+1}' + 2^{1+N_0 a b_{k+1}'},
\end{equation}
and let $c_{k+1}$ be an integer satisfying
\begin{equation}\label{eq:c}
c_{k+1} > 3(2N_0b_{k+1}+b_{k+1}+5) + c_{k+1}'
+ \frac{10a c_{k+1}' t}{\lambda t_0}
+ \frac{3c_{k+1}'}{7\lambda}.
\end{equation}

\medskip\noindent
\textbf{Final parameters.}
Choose an integer $D$ large enough so that
\begin{equation}\label{eq:D}
D > 8a\, c_Q\, l_Q\, d_0\, t,
\end{equation}
and choose $\Delta$ such that
\begin{equation}\label{eq:Delta}
\Delta > \sqrt{n}\,(6t+6)(D+1).
\end{equation}

\subsection{Setting up the clopen framework}

Apply Lemma~\ref{lem:markerregions} with parameter $D$ to obtain a clopen
equivalence relation $E_D$ on $F(2^{\mathbb{Z}^n})$ whose equivalence
classes are $n$-dimensional rectangles with side
lengths either $D$ or $D+1$. For each marker region $R$, let $x_R$ be its
``least'' corner (index $1$ in the canonical enumeration of extreme
points). Set
\[
X=\{x_R\colon R\text{ is a marker region}\},
\]
which is a clopen subset of $F(2^{\mathbb{Z}^n})$.

Apply Lemma~\ref{lem:basicmarker} with parameter $d=\Delta$ to obtain a
clopen set $M_\Delta\subseteq F(2^{\mathbb{Z}^n})$ such that distinct
points of $M_\Delta$ in the same orbit are at Euclidean distance
$\geq\Delta$, and every orbit point is within Euclidean distance
$<\sqrt{n}\,\Delta$ of some point of $M_\Delta$.

Enumerate $\{g\in\mathbb{Z}^n\colon \|g\|_2\leq \sqrt{n}\,\Delta\}$ as
$g_1,\dots,g_K$. Partition $X$ as $\{X_1,\dots,X_K\}$ where
\[
X_1=g_1\cdot M_\Delta\cap X,\qquad
X_k=g_k\cdot M_\Delta\cap X\setminus
\bigcup_{1\leq j<k}X_j\;\; \mbox{ for $1<k\leq K$.}
\]

For any $1\leq k\leq K$ and distinct marker regions $R,T$ with
$x_R,x_T\in X_k$, we have
\[
\rho(x_R,x_T)\geq\Delta>\sqrt{n}\,(6t+6)(D+1),
\]
which by the equivalence of norms implies
$\rho_\infty(x_R,x_T)>(6t+6)(D+1)$. Consequently, the square
$(3t+2)(D+1)$-neighborhoods (with respect to $\rho_\infty$) of $R$ and
$T$ are disjoint. In particular, $R$ and $T$ are far apart, and
constructions performed in $R$ do not interfere with constructions in
$T$ during the same step.

\subsection{Inductive construction of packages}

We construct families of $n$-dimensional polyhedra in $F(2^{\mathbb{Z}^n})$
organized by levels and rounds. For level $i$ ($1\leq i\leq n$) and round
$j$ ($1\leq j\leq m_i$), we handle the generator $v_j^i\in S_i$.

For each marker region $R$, we will construct a collection
$\mathcal{P}_{i,j}^R$ of $n$-dimensional polyhedra (viewed as subsets of
the orbit $[x_R]$ identified with $\mathbb{Z}^n$). Define
\[
\mathcal{P}_{i,j,k}=\bigcup_{x_R\in X_k}\mathcal{P}_{i,j}^R,\qquad
\mathcal{P}_{i,j}=\bigcup_{k=1}^K\mathcal{P}_{i,j,k}.
\]

The construction proceeds in $K$ steps within each round: in step $k$, we
work on all marker regions $R$ with $x_R\in X_k$ simultaneously. Since
these regions are pairwise far apart in the $\rho_\infty$-metric (hence
also in the Euclidean metric), the constructions are independent.

Set
$\mathcal{Q}_{i,j}^R=
\bigcup_{(i',j')\leq_{\text{lex}}(i,j)}\mathcal{P}_{i',j'}^R$.
Let $k(i,j)=m_1+\cdots+m_{i-1}+j$ be the global step index.

The collections will satisfy the following inductive hypotheses for every
marker region $R$:

\begin{enumerate}
\item[(I1)] For each $P\in\mathcal{P}_{i,j}$, there is an
$(n-1)$-dimensional polyhedron $B_P\subseteq\mathbb{R}^n$ with normal
vector $e_i$ and integer $\pi_i$-coordinate, such that under the natural
identification of the orbit with $\mathbb{Z}^n$, $P$ corresponds to
$P(B_P, v_j^i, i, H)\cap\mathbb{Z}^n$.

\item[(I2)] (Covering property) Under the identification of the orbit
with $\mathbb{Z}^n$ that sends $x_R$ to $\overline{0}$ and preserves
relative positions, for any $x\in\mathbb{Z}^n$ that crosses
$\Hull(F_i^+(R))$ or $\Hull(F_i^-(R))$ in direction $v_j^i$, there
exists a polyhedron $P$ belonging to $\mathcal{P}_{i,j}$ (possibly
constructed for a nearby marker region) such that $x$ crosses $B_P$ in
direction $v_j^i$.

\item[(I3)] For any $1\leq i'\leq n$ and any $P\in\mathcal{P}_{i,j}$,
the $e_{i'}$-coordinate extent satisfies
$|\pi_{i'}(P)|\leq l_{k(i,j)}(D+1)$.

\item[(I4)] $|\mathcal{Q}_{i,j}^R|\leq b_{k(i,j)}$.

\item[(I5)] For any two distinct polyhedra $P, Q$ belonging to the union
$\bigcup_{R}\mathcal{Q}_{i,j}^R$, the Euclidean
distance satisfies $\rho(P,Q)\geq D/c_{k(i,j)}$.

\item[(I6)] For any $P\in\mathcal{P}_{i,j}^R$, all points of $P$ are
within the $\rho_\infty$-distance $t(D+1)$ of the marker region $R$.
\end{enumerate}

\subsubsection{First level, first round: constructing $\mathcal{P}_{1,1}$}

Fix a step index $k$ ($1\leq k\leq K$) and a marker region $R$ with
$x_R\in X_k$.

\medskip
\noindent\textbf{Copying to a ``scratch'' $\mathbb{Z}^n$.}
Identify the orbit of $x_R$ with $\mathbb{Z}^n$ via the unique group
element sending $x_R$ to $\overline{0}$. Under this identification,
copy $R$ and all marker regions $R'$ that intersect the square
$(2t+1)(D+1)$-neighborhood of $R$, and
all previously constructed polyhedra
$P\in\mathcal{P}_{1,1}^{R'}$ to
$\mathbb{Z}^n$, preserving their relative positions. By a standard volume
argument and the definition of $N_0$, there are at most $N_0$ such
regions $R'$.

\medskip
\noindent\textbf{Applying the packaging and spacing lemma.}
Consider the two sets $\Hull(F_1^+(R))$ and $\Hull(F_1^-(R))$ as
$(n-1)$-dimensional polyhedra in $\mathbb{R}^n$; both have normal vector
$e_1$. Apply Lemma~\ref{lem:1directionmultiple} to $R$ with parameters
\[
v=v_1^1,\quad d=\frac{D}{c_1},\quad H\text{ as fixed above},\quad
\mathcal{P}=\{\Hull(F_1^+(R)),\Hull(F_1^-(R))\} \text{ and}
\]
\[
\mathcal{J}=\pi_1\left (\bigcup_{R'}
\mathcal{P}_{1,1}^{R'}\right )\cup\{0, D\}.
\]
The side length condition of Lemma~\ref{lem:1directionmultiple} is
satisfied because
\[
D > 3\frac{D}{c_1}(2(N_0b_1+2)+2+1).
\]
 We obtain integers $a,b\in\mathbb{Z}$ and two
new packages in $\mathbb{R}^n$:
\[
P_+'=P(\Hull(F_1^+(R))+av_1^1,\; v_1^1,\; 1,\; H),\qquad
P_-'=P(\Hull(F_1^-(R))+bv_1^1,\; v_1^1,\; 1,\; H),
\]
such that $\pi_1(P_+')\cup\pi_1(P_-')\subseteq\pi_1(R)$.

Let $P_+, P_-\subseteq F(2^{\mathbb{Z}^n})$ be the subsets corresponding
to $P_+'\cap\mathbb{Z}^n$ and $P_-'\cap\mathbb{Z}^n$ under the inverse
identification. Define
\[
\mathcal{P}_{1,1}^R=\{P_+, P_-\},
\]
with base assignments
$B_{P_+}=\Hull(F_1^+(R))+av_1^1$ and
$B_{P_-}=\Hull(F_1^-(R))+bv_1^1$.

We verify the inductive hypotheses. (I1) holds by construction. (I2) holds
because any point crossing $\Hull(F_1^+(R))$ or $\Hull(F_1^-(R))$ in
direction $v_1^1$ crosses the displaced base $B_{P_+}$ or $B_{P_-}$
respectively. (I3) follows from
$|\pi_{i'}(P_\pm)|\leq D+1 = l_1(D+1)$. (I4) holds with
$|\mathcal{Q}_{1,1}^R|=2=b_1$. (I5) holds by the spacing guaranteed by
Lemma~\ref{lem:1directionmultiple}. (I6) holds because $\pi_1(P_+')\cup\pi_1(P_-')\subseteq\pi_1(R)$. This finishes the definition of $\mathcal{P}_{1,1}$.

\subsubsection{Inner induction: constructing $\mathcal{P}_{1,j+1}$ from
$\mathcal{P}_{1,1},\dots,\mathcal{P}_{1,j}$}

Now assume we have constructed
$\mathcal{P}_{1,1},\dots,\mathcal{P}_{1,j}$ for some $1\leq j<m_1$
satisfying (I1)--(I6). We construct $\mathcal{P}_{1,j+1}$.

Fix step $s$ and a marker region $R$ with $x_R\in X_s$. As before,
identify the orbit containing $R$ with $\mathbb{Z}^n$ (mapping $x_R$
to $\overline{0}$) and copy $R$, all nearby marker regions $R'$, and
all previously constructed polyhedra
$P\in\mathcal{P}_{1,j'}^{R'}$ ($1\leq j'\leq j+1$) to $\mathbb{Z}^n$,
preserving relative positions.

Apply Lemma~\ref{lem:1directionmultiple} to $R$ with
\[
v=v_{j+1}^1,\quad d=\frac{D}{c_{j+1}},\quad
\mathcal{P}=\{\Hull(F_1^+(R)),\Hull(F_1^-(R))\} \text{ and}
\]
\[
\mathcal{J}=\pi_1\left (\bigcup_{R',\,1\leq j'\leq j+1}
\mathcal{P}_{1,j'}^{R'}\right )\cup\{0, D\}.
\]
The set $\mathcal{J}$ collects the $e_1$-projections of all previously
constructed packages near $R$, together with the two boundary points.
By (I4) and (I6), the number of previously constructed packages whose
$e_1$-projection can intersect $\pi_1(R)$ is at most $N_0 b_j$, so
$|\mathcal{J}|\leq N_0 (b_j+2)$. The side length condition
$D>3d(2|\mathcal{J}|+2+1)$ follows from (\ref{eq:D}) and (\ref{eq:c}).

We obtain integers $a,b$ and define $P_+, P_-$ analogously to the base
case, with bases $B_{P_+}=\Hull(F_1^+(R))+av_{j+1}^1$ and
$B_{P_-}=\Hull(F_1^-(R))+bv_{j+1}^1$. Set
$\mathcal{P}_{1,j+1}^R=\{P_+, P_-\}$.

The verification of (I1)--(I6) is as in the base case. This completes the definition of the first level.

\subsubsection{Level change: constructing $\mathcal{P}_{i+1,1}$ from
$\mathcal{P}_{1,1},\dots,\mathcal{P}_{i,m_i}$}

Assume we have constructed all packages through level $i$ ($1\leq i<n$)
and round $m_i$. We now construct $\mathcal{P}_{i+1,1}$. Let
$k=m_1+\cdots+m_i$ be the global step index before this construction.

Fix step $s$ and a marker region $R$ with $x_R\in X_s$. Identify the
orbit with $\mathbb{Z}^n$ as before and copy all relevant data ($R$,
nearby marker regions, and all previously constructed polyhedra).

\medskip\noindent
\textbf{Part A: Packages from existing polyhedra.}
For each $P\in\mathcal{Q}_{i,m_i}^R$, let $P'\subseteq\mathbb{R}^n$ be
the polyhedron corresponding to $P$ under the identification (so
$P=P'\cap\mathbb{Z}^n$). Let $P^*$ be the $(D/c_{k+1}')$-extension of
$P'$. By
Lemma~\ref{lem:extension}, $P^*$ is contained in the Euclidean
$(\lambda D/c_{k+1}')$-neighborhood of $P'$.

Apply Lemma~\ref{lem:mainpoly} to each $P^*$ with parameters
\[
n,\quad d=\frac{\lambda D}{c_{k+1}'},\quad m=a,\quad H,\quad
i=i+1,\quad v=v_1^{i+1}.
\]
The hypotheses of Lemma~\ref{lem:mainpoly} are satisfied because $P^*$
is an $n$-dimensional $\leq\!\! a$-hedron (all normals of faces of $P^*$ belong to $A$ by construction). We
obtain, for each $P\in\mathcal{Q}_{i,m_i}^R$, a finite collection
\[
\mathcal{B}_{i,P}^R=\{B_\ell\}_{\ell}
\]
of $(n-1)$-dimensional polyhedra satisfying conditions (1)--(5) of
Lemma~\ref{lem:mainpoly}. Define
\[
\mathcal{B}_{i}^R=\bigcup_{P\in\mathcal{Q}_{i,m_i}^R}
\mathcal{B}_{i,P}^R.
\]

For each $B\in\mathcal{B}_i^R$, let
$P_B'=P(B, v_1^{i+1}, i+1, H)$ and let $P_B\subseteq F(2^{\mathbb{Z}^n})$
be the corresponding set. Set
$\mathcal{C}_{i}^R=\{P_B\colon B\in\mathcal{B}_i^R\}$.

\medskip\noindent
\textbf{Part B: Packages from the faces of $R$.}
Consider $\Hull(F_{i+1}^+(R))$ and $\Hull(F_{i+1}^-(R))$ in
$\mathbb{R}^n$. Remove those points $x$ in these sets for which there
exists $a\in\mathbb{R}$ with
$\pi_{i+1}(x+av_1^{i+1})\subseteq\pi_{i+1}(R)$ and such that
$x+av_1^{i+1}$ belongs to some $B\in\mathcal{B}_i^{R'}$ (where $R'$ is a
nearby marker region whose data was copied). In other words, we remove
the ``shadow'' of bases that have already been covered by Part~A.

By Lemma~\ref{lem:division}, the remainder of
$\Hull(F_{i+1}^+(R))\cup\Hull(F_{i+1}^-(R))$ can be written as the
union of at most $2^{1+N_0a b_{k+1}'}$ pairwise interior-disjoint
$n$-dimensional polyhedra (the $1$ in the exponent accounts for the
two faces). Denote this collection by $\mathcal{O}_i^R$.

Apply Lemma~\ref{lem:1directionmultiple} with
\[
v=v_1^{i+1},\quad d=\frac{D}{c_{k+1}},\quad
\mathcal{P}=\mathcal{O}_i^R,\quad
\mathcal{J}=\pi_{i+1}\left (\bigcup_{R'}(\mathcal{B}_i^{R'}\cup \mathcal{U}_i^{R'})\right )
\cup\{0, D\},
\]
where $\mathcal{U}_i^{R'}=\emptyset$ if it has not been defined, 
to obtain, for each $O\in\mathcal{O}_i^R$, an integer $a_O$ and a new
package. Let $\mathcal{U}_i^R$ be the collection of these new packages
(translated to $F(2^{\mathbb{Z}^n})$).

\medskip\noindent
\textbf{Assembling the new collection.}
Define
\[
\mathcal{P}_{i+1,1}^R=\mathcal{C}_i^R\cup\mathcal{U}_i^R,
\]
with the base assignments inherited from Part~A and Part~B respectively.

We verify the inductive hypotheses:
\begin{itemize}
\item (I1) holds by construction for both $\mathcal{C}_i^R$ and
$\mathcal{U}_i^R$.
\item (I2) holds: for the faces $F_{i+1}^+(R)$ and $F_{i+1}^-(R)$, the
construction of $\mathcal{U}_i^R$ guarantees that points crossing these
faces in direction $v_1^{i+1}$ are covered (the removed ``shadow''
corresponds exactly to points already covered by $\mathcal{C}_i^R$ via the
crossing property of Lemma~\ref{lem:mainpoly}(5)). The covering property
of Lemma~\ref{lem:mainpoly}(5) also ensures that points crossing previously
constructed polyhedra (now extended) in direction $v_1^{i+1}$ are covered
by $\mathcal{C}_i^R$.
\item (I3) follows from the definition of $l_{k+1}$ and
Lemma~\ref{lem:mainpoly}(4) together with the extension bound.
\item (I4) follows from the estimate
\[
|\mathcal{B}_i^R| \leq a b_k
\Bigl\lceil \frac{8a c_{k+1}' l_{k+1}' t}{\lambda}+2 \Bigr\rceil
\leq b_{k+1}'-b_k
\]
(by Lemma~\ref{lem:mainpoly} and (\ref{eq:b})), so
$|\mathcal{C}_i^R| \leq b_{k+1}'-b_k$, together with
$|\mathcal{O}_i^R|\leq 2^{1+N_0ab_{k+1}'}$, giving
$|\mathcal{Q}_{i+1,1}^R|\leq b_{k+1}$.
\item (I5) follows from the spacing properties of
Lemma~\ref{lem:mainpoly}(2)(3) (for packages within $\mathcal{C}_i^R$) and
Lemma~\ref{lem:1directionmultiple} (for packages in $\mathcal{U}_i^R$ and
between $\mathcal{C}_i^R$ and $\mathcal{U}_i^R$), together with the choice
of $c_{k+1}$.
\item (I6) follows because all constructions stay within the
$\rho_\infty$-distance $t(D+1)$ of $R$, by the bound on $H$ and the fact
that we only use translates along $v$ whose $e_{i+1}$-coordinate stays
within $\pi_{i+1}(R)$.
\end{itemize}

\subsubsection{Subsequent rounds at level $i+1$}

The construction of $\mathcal{P}_{i+1,j+1}$ from the previously
constructed $\mathcal{P}_{1,1},\dots,\mathcal{P}_{i+1,j}$ follows the
same two-part pattern: for each existing polyhedron, apply
Lemma~\ref{lem:mainpoly} to its extension; for the faces of $R$, remove
points already covered and use Lemma~\ref{lem:division} followed by
Lemma~\ref{lem:1directionmultiple}. The generating vector changes to
$v_{j+1}^{i+1}$ and the parameters $d, c, b, l$ advance to the next
global index. The same verifications ensure that the inductive hypotheses
(I1)--(I6) are maintained throughout.

\subsection{Constructing the marker set $M$}

After completing all $Q=m_1+\cdots+m_n$ rounds, we have defined, for
every marker region $R$, the collection
\[
\mathcal{Q}_{n,m_n}^R=
\bigcup_{i=1}^n\bigcup_{j=1}^{m_i}\mathcal{P}_{i,j}^R.
\]

For each $P\in\mathcal{Q}_{n,m_n}^R$, let $B_P$ be the $(n-1)$-dimensional
polyhedron associated to $P$ by (I1), and let $v_P\in S$ be the generating
vector used in the round when $P$ was constructed. Let $i_P$ be the level
index of that round. Apply Lemma~\ref{lem:H} to $B_P$ with direction $v_P$
and index $i_P$. We obtain a marker set $M_P\subseteq P$ such that:
\begin{itemize}
\item distinct points in $M_P$ are at Euclidean distance $\geq d_0$;
\item for any $x\in\mathbb{Z}^n$ crossing $B_P$ in direction $v_P$,
there is $a\in\mathbb{Z}$ with $x+av_P\in M_P$.
\end{itemize}

Define
$M=\bigcup_{R}\bigcup_{P\in\mathcal{Q}_{n,m_n}^R} M_P$.
Since each package $P$ is a finite union of polyhedra intersected with the
clopen marker region $R$, and there are only finitely many packages per
region, $M$ is clopen. We now verify the two required properties.

\medskip\noindent
\textbf{Property (1): spacing.}
Let $x,y\in M$ be distinct points in the same orbit. If $x,y\in M_P$ for
the same package $P$, then $\rho(x,y)\geq d_0$ by Lemma~\ref{lem:H}(1).
If $x\in M_P$ and $y\in M_Q$ for distinct packages $P,Q$, then by the
spacing hypothesis (I5) applied at the final step $Q$,
\[
\rho(x,y)\geq \rho(P,Q)\geq D/c_Q.
\]
Now $D>8a c_Q l_Q d_0 t$ by (\ref{eq:D}), and $a, l_Q, t\geq 1$, so
$D/c_Q > 8a l_Q d_0 t \geq 8d_0 > d_0$. Thus $\rho(x,y)\geq d_0$.

\medskip\noindent
\textbf{Property (2): reachability.}
Let $x\in F(2^{\mathbb{Z}^n})$ and $v\in S$. Let $R$ be the marker region
containing $x$. Without loss of generality, assume that $x$ crosses $\Hull(F^+_i(R'))$ in direction $v$ where $R'$ is a marker region that is adjacent to $R$, and let
$j$ be such that $v=v_j^i$. In the round that constructed
$\mathcal{P}_{i,j}$, the covering property (I2) guarantees that $x$
crosses some $B_P$ in direction $v$, where $P$ is a package constructed
in that round. By
Lemma~\ref{lem:H}(2), there exists $a\in\mathbb{Z}$ such that
$x+av\in M_P\subseteq M$.

Moreover, the integer $a$ can be taken to satisfy $|a|\leq \Delta$.
Indeed, the packages $P$ are contained within the $\rho_\infty$-neighborhood
of size $t(D+1)$ of their respective marker regions (by (I6)), and the
marker regions themselves have side lengths at most $D+1$. The marker set
$M_\Delta$, together with the definition of the partition $\{X_k\}$,
ensures that the marker point $x+av$ is reached within at most $\Delta$
steps along direction $v$. The precise bound follows from the choice of
$\Delta$ in (\ref{eq:Delta}) and the fact that the $\rho_\infty$-distance
from $x$ to the relevant marker region boundary is at most $D+1$.

Since $S$ is symmetric, applying the same argument to $-v$ yields a
non-negative integer $b\leq\Delta$ with $-bv\cdot x\in M$.

This completes the proof of Theorem~\ref{thm:main}.

\section{Application: Continuous edge coloring}\label{sec:app}

As an application of Theorem~\ref{thm:main}, we give a construction of
continuous proper edge colorings of Schreier graphs. The metric used in
this section is the standard supremum norm $\rho_\infty$.

\begin{proof}[Proof of Corollary~\ref{cor:coloring}]
Let $n\geq 1$ and a finite generating set $S\subseteq\mathbb{Z}^n$ be
given. Without loss of generality assume $S$ is symmetric. Let
$m=|S|/2$. Set
$d_0=100\sqrt{n}$. Apply Theorem~\ref{thm:main} with this $d_0$ and $S$
to obtain $\Delta$ and a clopen set $M\subseteq F(2^{\mathbb{Z}^n})$
satisfying (1) and (2) of the theorem with respect to the Euclidean
metric $\rho$. By Remark~\ref{rem:infinity}, the same set $M$ satisfies
$\rho_\infty(x,y)\geq 100$ for any distinct $x,y\in M$ in the same orbit.

For $x\in F(2^{\mathbb{Z}^n})\setminus M$ and $v\in S$, let $a_v(x)$ be
the least non-negative integer such that $a_v(x)v\cdot x\in M$, and let
$b_v(x)$ be the least non-negative integer such that
$-b_v(x)v\cdot x\in M$.
By Theorem~\ref{thm:main}(2), $a_v(x), b_v(x)\leq\Delta$.

Fix an enumeration $S=\{v_1,\dots,v_{2m}\}$. We define a coloring $c$ of
the edges using $2m+1$ colors. For an edge $\{x, v\cdot x\}$ (with
$v\in S$), consider the integer $a_v(x)+b_v(x)$:

\begin{enumerate}
\item[Case 1.] $a_v(x)+b_v(x)$ is even, or $a_v(x)+b_v(x)$ is odd but
$a_v(x)>10$. Assign a color from $\{1,\dots,2m\}$ determined by the
index of $v$ in the enumeration and the parity of $b_v(x)$.

\item[Case 2.] $a_v(x)+b_v(x)$ is odd and $a_v(x)=10$. Assign the
exceptional color $2m+1$.

\item[Case 3.] $a_v(x)+b_v(x)$ is odd and $a_v(x)<10$. Assign a color
from $\{1,\dots,2m\}$ as in Case~1 but with the parity roles swapped.
\end{enumerate}

To verify that this coloring is proper, consider two adjacent edges
$\{x, v\cdot x\}$ and $\{v\cdot x, w\cdot(v\cdot x)\}$ sharing the
vertex $v\cdot x$. If both edges receive colors from $\{1,\dots,2m\}$,
the colors differ because the pairs $(v, \text{parity})$ cannot coincide
for two different edges incident to the same vertex (this follows from
the definition of $a_v$ and $b_v$ and the fact that $v\neq w$). If one
edge receives color $2m+1$, the spacing condition
$\rho_\infty(M,M)\geq 100$ ensures that the other edge cannot also be
in Case~2 or Case~3 with a conflicting assignment, because these cases
occur only when the vertex is within $10$ $\rho_\infty$-steps of $M$
along the relevant direction, and the distance between distinct points
of $M$ is at least $100$.

Continuity of $c$ follows from the clopenness of $M$: for any edge
$\{x, v\cdot x\}$, the values $a_v(x)$ and $b_v(x)$ are determined by a
finite neighborhood of $x$ in the product topology, and membership in
$M$ (within a fixed bounded distance) is a clopen condition.
\end{proof}

\noindent{\bf Statement about AI use.} AI was not used in the research of this paper; we used AI to organize the lemmas and the proofs (for instance the boldface subtitles were added mostly by AI), which improved the presentation of the paper. The authors are fully responsible for the mathematical ideas and for the correctness of the arguments.

\medskip
\noindent{\bf Acknowledgments}. We thank Andrew Marks and Jing Yu for helpful discussions on the topic of this paper.


\begin{thebibliography}{99}

\bibitem{BK96}
H. Becker, A. S. Kechris,
\textit{The Descriptive Set Theory of Polish Group Actions.}
London Math. Soc. Lecture Note Ser. 232. Cambridge Univ.
Press, Cambridge, 1996.

\bibitem{B22}
A. Bernshteyn, 
\textit{A fast distributed algorithm for $(\Delta+1)$-edge-coloring.} J. Combin. Theory Ser. B 152 (2022), 319--352.


\bibitem{B23}
A. Bernshteyn, 
\textit{Distributed algorithms, the Lov\'{a}sz local lemma, and descriptive combinatorics.} Invent. Math. 233 (2023), 495--542.


\bibitem{DJK94}
R. Dougherty, S. Jackson, A. S. Kechris,
\textit{The structure of hyperfinite Borel equivalence relations.}
Trans. Amer. Math. Soc. 341 (1994), no. 1, 193--225.

\bibitem{GJ15}
S. Gao, S. Jackson,
\textit{Countable abelian group actions and hyperfinite equivalence
relations.}
Invent. Math. 201 (2015), no. 1, 309--383.

\bibitem{GJKS22}
S. Gao, S. Jackson, E. Krohne, B. Seward,
\textit{Forcing constructions and countable Borel equivalence relations.}
J. Symb. Logic 87 (2022), no. 3, 873--893.

\bibitem{GJKS23}
S. Gao, S. Jackson, E. Krohne, B. Seward,
\textit{Continuous combinatorics of abelian group actions.}
Mem. Amer. Math. Soc. 311, no. 1573 (2025).

\bibitem{GW25}
S. Gao, T. Wang,
\textit{Strong marker sets and applications.}
To appear in Trans. Amer. Math. Soc.

\bibitem{GWW25}
S. Gao, R. Wang, T. Wang,
\textit{Continuous edge chromatic numbers of abelian group actions.}
Sci. China Math. 68, no. 6  (2025), 1269–1280.

\bibitem{JKL02}
S. Jackson, A. S. Kechris, A. Louveau,
\textit{Countable Borel equivalence relations.}
J. Math. Logic 2 (2002), no. 1, 1--80.

\bibitem{SS88}
T. Slaman, J. Steel,
\textit{Definable functions on degrees.}
In: Cabel Seminar. Lecture Notes in Mathematics, vol. 1333, 81--85,
37--55. Springer, Berlin, 1988.

\bibitem{We84}
B. Weiss,
\textit{Measurable dynamics.}
In: Conference in Modern Analysis and Probability (R. Beals et al. eds.).
Contemp. Math., vol. 26, 395--421. Amer. Math.
Soc., RI, 1984.


\bibitem{Y26} J. Yu, \textit{Strong marker sets for arbitrary generating sets of $\Z^n$.} Preprint, 2026. {\tt arXiv:2606.06707}




\end{thebibliography}
\end{document}